\numberwithin{equation}{section}
\newtheorem{theorem}{Theorem}[section]
\newtheorem{lemma}[theorem]{Lemma}
\newtheorem{remark}{Remark}[section]
\newcommand{\be}{\begin{equation}}
\newcommand{\ee}{\end{equation}}
\newcommand\bes{\begin{eqnarray}}
\newcommand\ees{\end{eqnarray}}
\newcommand{\bess}{\begin{eqnarray*}}
\newcommand{\eess}{\end{eqnarray*}}
\def\R{\mathbb{R}}
\def\N{\mathbb{N}}
\begin{document}

\begin{center} {\bf\Large Positive solutions for the Schr\"{o}dinger-Poisson system

   with steep potential well
 \footnote{This work was supported by the National Natural Science Foundation of China
 (No.11901284), the Natural Science Foundation of Jiangsu Province (No.BK20180814),
 the Natural science fund for colleges and universities in Jiangsu Province (No.18KJB110009)
 and Jiangsu Planned Projects for Postdoctoral Research Funds (No.2019K097).}
} \\[4mm]
 {\large Miao Du$^{\text{a}}$} \\[2mm]
{

   $^{\text{a}}$ School of Applied Mathematics, Nanjing University of Finance and Economics,

     Nanjing 210023, P.R. China
}
\renewcommand{\thefootnote}{}
\footnote{\hspace{-1ex}\emph {~E-mail addresses:}
dumiaomath@163.com.}
\end{center}

\setlength{\baselineskip}{16pt}

\begin{quote}
  \noindent {\bf Abstract:} In this paper, we consider the following
  Schr\"odinger-Poisson system
  \vskip -0.3 true cm
  \begin{equation*}
   \begin{cases}
     -\Delta u+\lambda V(x)u+ \mu\phi u=|u|^{p-2}u &\text{in $\R^{3}$},
       \vspace{0.1cm}\\
     -\Delta \phi=u^{2} &\text{in $\R^{3}$},
   \end{cases}
  \end{equation*}
  \vskip -0.1 true cm
  where $\lambda,\:\mu>0$ are real parameters and $2<p<6$. Suppose that
  $V(x)$ represents a potential well with the bottom $V^{-1}(0)$,
  the system has been widely studied in the case $4\leq p<6$.
  In contrast, no existence result of solutions is available for
  the case $2<p<4$ due to the presence of the nonlocal term $\phi u$.
  With the aid of the truncation technique and the parameter-dependent
  compactness lemma, we first prove the existence of positive solutions
  for $\lambda$ large and $\mu$ small in the case $2<p<4$.
  Then we obtain the nonexistence of nontrivial solutions
  for $\lambda$ large and $\mu$ large in the case $2<p\leq3$.
  Finally, we explore the decay rate of the positive solutions
  as $|x| \rightarrow \infty$ as well as their asymptotic behavior
  as $\lambda \rightarrow \infty$ and $\mu \rightarrow 0$.

  \vskip 0.05 true cm
  \noindent {\bf Keywords}: {Schr\"odinger-Poisson system; Positive solution;
    Asymptotic behavior; Truncation technique; Variational method}

  \vskip 0.05 true cm
  \noindent {\bf MSC 2010}: 35J20; 35J60; 35B40
\end{quote}

\section{Introduction}

\indent

The present paper is devoted to investigate the existence and asymptotic behavior of
positive solutions for the following Schr\"odinger-Poisson system
\vskip -0.2 true cm
\begin{equation}\label{eq 1.1}
 \begin{cases}
  -\Delta u+\lambda V(x)u+ \mu\phi u=|u|^{p-2}u &\text{in $\R^{3}$},
       \vspace{0.1cm}\\
  -\Delta \phi=u^{2} &\text{in $\R^{3}$},
 \end{cases}
\end{equation}
where $\lambda,\:\mu>0$ are real parameters, $2<p<6$ and the potential
$V$ satisfies the following conditions:

\vspace{0.1 cm}
\noindent \hspace{0.2 cm}
$(V_{1})$  $V\in C(\mathbb{R}^{3},\:\mathbb{R})$ and $V\geq 0$
           on $\mathbb{R}^{3}$.

\noindent \hspace{0.2 cm}
$(V_{2})$  There exists $b>0$ such that $\mathcal{V}_b:=\{x \in \R^3:
           \: V(x)<b\}$ is nonempty and has finite measure.

\noindent \hspace{0.2 cm}
$(V_{3})$  $\Omega =int\:V^{-1}(0)$ is a nonempty open set with
           locally Lipschitz boundary and $\overline{\Omega }= V^{-1}(0)$.
\vspace{0.1 cm}

\noindent
This kind of hypotheses was first introduced by Bartsch and Wang
\cite{Bartsch1995} in the study of Schr\"{o}dinger equations,
and has attracted the attention of many domestic scholars, see e.g.
\cite{Bartsch2001,Dingyanheng2007,Jiangyongsheng2011,Wangzhengping2009,Zhaoleiga2013}.
Note that, the assumptions $(V_{1})$--$(V_{3})$ imply that $\lambda V$
represents a potential well with the bottom $V^{-1}(0)$ and its steepness
is controlled by the parameter $\lambda$. As a result, $\lambda V$
is often known as the steep potential well if $\lambda$ is sufficiently large,
and we expect to find solutions which are localized near the bottom of the potential $V$.
The second equation in \eqref{eq 1.1} determines $\phi: \mathbb{R}^{3}\rightarrow\mathbb{R}$
only up to harmonic functions. It is natural to choose $\phi$ as the Newton potential of $u^{2}$,
that is, the convolution of $u^{2}$ with the fundamental solution $\Phi$ of the Laplacian,
which is given by $\Phi(x)= (4 \pi |x|)^{-1}$. Denote by $\phi_u = \Phi \ast |u|^{2}$,
then with this formal inversion of the second equation in \eqref{eq 1.1}, we obtain
the integro-differential equation
\begin{equation}\label{eq 1.2}
   -\Delta u+ \lambda V(x)u + \mu \phi_u u =|u|^{p-2}u
   \quad \text{in}\ \mathbb{R}^{3}.
\end{equation}
Obviously, $(u, \phi_u)$ is a solution of \eqref{eq 1.1} if and only if $u$ is a solution of \eqref{eq 1.2}.

System \eqref{eq 1.1}, also known as Schr\"{o}dinger-Maxwell system, arises
in many problems of physics. 
We refer the reader e.g. to \cite{Mauser-2001}, where \eqref{eq 1.2} is
discussed in a quantum mechanical context where the particular exponent
$p=8/3$ appears in this case, see \cite[p. 761]{Mauser-2001}.
The unknowns $u$ and $\phi$ represent the wave functions associated with
the particle and electric potential, and the function $V$ is an external potential.
We refer to Benci and Fortunato \cite{Benci1998} for more details.
This model can also appear in semiconductor theory to describe solitary waves
\cite{Markowich1990,Sanchez2004}.
In recent years, the following Schr\"odinger-Poisson system
\begin{equation*}
 \begin{cases}
  -\Delta u+ V(x)u+K(x)\phi u=f(x,u) &\text{in} \  \mathbb{R}^{3}\\
  -\Delta \phi=K(x)u^{2}             &\text{in} \  \mathbb{R}^{3}
 \end{cases}
\end{equation*}
have been widely investigated,
whereas existence, nonexistence and multiplicity results have been obtained
under variant assumptions on $V$, $K$ and $f$ via variational methods,
see e.g. \cite{Ambrosetti-Ruiz-2008,Azzollini-Pomponio-2008,Bellazzini-Jeanjean-Luo-2008,
Cerami-2010,Chen-Tang-2020,DAprile-2004-1,DAprile-2005,Ruiz-2006,Sunjuntao2020,Zhaoleiga-2008}
and the references therein.

Inspired by \cite{Dingyanheng2007,Jiangyongsheng2011}, Zhao et al. \cite{Zhaoleiga2013} studied the system
\begin{equation}\label{eq 1.3}
 \begin{cases}
  -\Delta u+\lambda V(x)u+ K(x)\phi u =|u|^{p-2}u  &\text{in}\ \mathbb{R}^{3},\\
  -\Delta \phi=K(x)u^{2} &\text{in}\ \mathbb{R}^{3}.
 \end{cases}
\end{equation}
In this work, a positive function $K \in L^{2}(\R^{3})\cup L^{\infty}(\R^{3})$
and $3<p<6$ are considered. By using variational methods, the existence and
asymptotic behavior of nontrivial solutions were detected in \cite{Zhaoleiga2013}.
In particular, the potential $V$ is allowed to be sign-changing for the case $4<p<6$.
We would like to point out that the parameter-dependent compactness lemma
(see \cite[Lemma 2.6]{Zhaoleiga2013}) for the case $3<p<4$ relies heavily
on the condition $K \in L^2(\R^3)$ (this condition can weaken the strong influence of the nonlocal term),
and hence the authors dealt only with the case $3<p<4$ for $K \in L^2(\R^3)$. In addition, the approach
(Jeanjean's monotonicity trick \cite{Jeanjean-1999}) used in \cite{Zhaoleiga2013}
does not work any more for the case $2<p\leq3$, since we might not obtain a bounded Palais-Smale sequence.
As far as we know, there is no existence result of solutions for \eqref{eq 1.3}
in the case where $2<p<4$ and $K \in L^{\infty}(\R^{3})$.
This gap of information is unpleasant not only from a mathematical point of view but also since,
as already remarked above, the case $p=8/3$ is relevant in 3-dimensional quantum mechanical models,
see \cite[p. 761]{Mauser-2001}.
The key difficulty in this case is the competing nature of the local and nonlocal
superquadratic terms in the functional corresponding to \eqref{eq 1.3}. In particular,
we note that the nonlinearity $u \mapsto f(u):=|u|^{p-2}u$ with $2<p<4$
does not satisfy the Ambrosetti-Rabinowitz type condition
$$
  0<\mu\int_{0}^{u} f(s)\:ds \leq f(u)u \qquad
  \text{for all $u\neq0$ with some $\mu>4$}
$$
which would readily obtain a bounded Palais-Smale sequence or Cerami sequence.
Moreover, the fact that the function $f(s)/|s|^{3}$ is not increasing
on $(-\infty,\, 0)$ and $(0,\,\infty)$ prevents us from using
Nehari manifold and fibering methods as e.g. in \cite{Rabinowitz1992,Szulkin2010}.

Motivated by the works mentioned above, the purpose of the present paper
is to consider the Schr\"odinger-Poisson system \eqref{eq 1.3}
in the case where $2<p<4$ and $K \in L^{\infty}(\R^{3})$.
More precisely, we shall first prove the existence of positive solutions
for \eqref{eq 1.3} for $\lambda$ large and $\mu$ small in this case.
Then we obtain the nonexistence of nontrivial solutions for $\lambda$ large
and $\mu$ large in the case $2<p\leq3$. Finally, we explore the decay rate of
the positive solutions as $|x| \rightarrow \infty$ as well as their
asymptotic behavior as $\lambda \rightarrow \infty$ and $\mu \rightarrow 0$.
For the sake of simplicity, in the sequel we always assume that $K$ is a positive constant.
Consequently, we are dealing with the system \eqref{eq 1.1}, the associated scalar equation
\eqref{eq 1.2} and the associated energy functional
\begin{equation*}
  I_{\lambda,\mu} (u) = \frac{1}{2}\int_{\mathbb{R}^{3}} \left(|\nabla u|^{2}+\lambda
   V(x)u^{2}\right)dx+\frac{\mu}{4}\int_{\mathbb{R}^{3}}\phi_uu^{2}\:dx
   - \frac{1}{p}\int_{\mathbb{R}^{3}}|u^+|^p\: dx
\end{equation*}
defined in the space
$$
  E_\lambda=\Bigl\{u\in H^{1}(\mathbb{R}^{3}):\:
  \int_{\mathbb{R}^{3}}V(x)u^{2}\: dx < \infty\Bigr\}
$$
endowed with the norm
\begin{equation*}
   \|u\|_\lambda=\Bigl(\int_{\mathbb{R}^{3}}\left( |\nabla u|^2 +
   \lambda V(x)u^2\right)dx\Bigr)^{1/2},
\end{equation*}
where $u^+= \max\{u, \,0\}$. Our first main result is concerned with the existence of positive solutions.



\begin{theorem}\label{th 1.1}
Suppose that $2<p<4$ and $(V_{1})$--$(V_{3})$ hold. Then
there exist $\lambda^\ast>1$ and $\mu_\ast>0$ such that
for each $\lambda \in (\lambda^\ast,\,\infty)$ and $\mu \in (0,\,\mu_\ast)$,
\eqref{eq 1.2} has at least a positive solution $u_{\lambda,\mu} \in E_\lambda$.
Moreover, there exist constants $\tau,\, T>0$
$($independent of $\,\lambda$ and $\mu$$)$ such that
\vskip -0.4 true cm
\begin{equation}\label{eq 1.4}
  \tau \leq \|u_{\lambda,\mu}\|_\lambda \leq T \qquad
  \text{for all $\lambda$ and $\mu$}.
\end{equation}
\end{theorem}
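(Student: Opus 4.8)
The plan is to recover a mountain-pass geometry, which $I_{\lambda,\mu}$ itself lacks once $\mu>0$: since $p<4$, along any ray $t\mapsto I_{\lambda,\mu}(tu)$ the quartic term $\frac{\mu t^4}{4}\int_{\R^3}\phi_u u^2\,dx$ eventually dominates the subquartic nonlinearity, so $I_{\lambda,\mu}(tu)\to+\infty$ and there is no mountain to cross. To repair this I would introduce a smooth non-increasing cutoff $h:[0,\infty)\to[0,1]$ with $h\equiv1$ on $[0,T_0^2]$ and $h\equiv0$ on $[T_1^2,\infty)$, and study the truncated functional
\begin{equation*}
  I_{\lambda,\mu}^{T}(u)=\frac12\|u\|_\lambda^2+\frac{\mu}{4}\,h\bigl(\|u\|_\lambda^2\bigr)\int_{\R^3}\phi_u u^2\,dx-\frac1p\int_{\R^3}|u^+|^p\,dx .
\end{equation*}
For $\|u\|_\lambda\le T_0$ one has $I_{\lambda,\mu}^{T}=I_{\lambda,\mu}$, so a critical point of $I_{\lambda,\mu}^T$ lying in that ball is a genuine solution of \eqref{eq 1.2}; for $\|u\|_\lambda\ge T_1$ one has $I_{\lambda,\mu}^{T}=I_{\lambda,0}$, the clean local functional whose rays tend to $-\infty$. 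The whole argument is then a small-$\mu$ perturbation of the pure Schr\"odinger problem at $\mu=0$.

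First I would record the standard facts about $\phi_u$ (nonnegativity, the Hardy--Littlewood--Sobolev bound $\int_{\R^3}\phi_u u^2\,dx\le C\|u\|_{12/5}^4\le C\|u\|_\lambda^4$, uniform for $\lambda\ge1$ via the embedding $E_\lambda\hookrightarrow H^1(\R^3)$ that $(V_1)$--$(V_2)$ furnish, together with weak continuity of the nonlocal terms). With these, $I_{\lambda,\mu}^T$ is $C^1$ and has a mountain-pass geometry uniform in the parameters: there are $\rho,\alpha>0$ with $I_{\lambda,\mu}^T\ge\alpha$ on $\|u\|_\lambda=\rho$, since near $0$ the quadratic term dominates and the quartic term only helps; and, fixing any nonnegative $w\in C_c^\infty(\Omega)$ (so that $\lambda V w^2\equiv0$ and $\|tw\|_\lambda$ is independent of $\lambda$), the endpoint $e=t_0 w$ with $t_0$ large enough that $\|e\|_\lambda\ge T_1$ satisfies $I_{\lambda,\mu}^T(e)=\frac{t_0^2}{2}\|\nabla w\|_2^2-\frac{t_0^p}{p}\|w\|_p^p<0$. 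Since the cutoff confines the nonlocal contribution along $[0,e]$ to the region $h>0$, the min-max level obeys $\alpha\le c_{\lambda,\mu}\le C_0$ with $C_0$ independent of $\lambda$ and $\mu$.

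Next I would verify the Palais--Smale condition for $I_{\lambda,\mu}^T$ at $c_{\lambda,\mu}$ for all $\lambda$ large. Boundedness of PS sequences is where the truncation pays off again: where $h>0$ the norm is a priori $\le T_1$, while where $h=0$ the functional is $I_{\lambda,0}$ and the combination $I_{\lambda,0}(u_n)-\frac1p\langle I_{\lambda,0}'(u_n),u_n\rangle=(\frac12-\frac1p)\|u_n\|_\lambda^2$ gives the bound. Strong convergence is exactly the parameter-dependent compactness lemma: using $(V_2)$ one splits $\R^3$ into $\mathcal V_b$ and its complement, controls the tail by the coercive term $\lambda\int_{\R^3}Vu^2\,dx$, and upgrades weak to strong convergence provided $\lambda$ exceeds a threshold $\lambda^\ast$ fixed by $C_0$. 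The mountain-pass theorem then yields $u_{\lambda,\mu}$ with $I_{\lambda,\mu}^T(u_{\lambda,\mu})=c_{\lambda,\mu}$ and $(I_{\lambda,\mu}^T)'(u_{\lambda,\mu})=0$.

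It remains to show the cutoff is inactive. Combining $c_{\lambda,\mu}\le C_0$ with $\langle(I_{\lambda,\mu}^T)'(u_{\lambda,\mu}),u_{\lambda,\mu}\rangle=0$ — in which the term carrying $h'$ has a favorable sign, being non-increasing, and so drops from the upper estimate — yields
\begin{equation*}
  \Bigl(\tfrac12-\tfrac1p\Bigr)\|u_{\lambda,\mu}\|_\lambda^2\le C_0+C\mu\|u_{\lambda,\mu}\|_\lambda^4 .
\end{equation*}
For $\mu$ small this confines $\|u_{\lambda,\mu}\|_\lambda$ either below $T:=(2C_0/(\tfrac12-\tfrac1p))^{1/2}$ or above a value exceeding $T_1$; the upper branch is excluded because on $\{h=0\}$ a critical point of $I_{\lambda,\mu}^T$ is one of $I_{\lambda,0}$, for which the exact identity $(\tfrac12-\tfrac1p)\|u\|_\lambda^2=c_{\lambda,\mu}\le C_0$ caps the norm well below $T_1$. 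Fixing $T_0>T$ (the thresholds $T_0<T_1$ and $\mu_\ast$ chosen consistently), the cutoff is inactive, so $u_{\lambda,\mu}$ solves \eqref{eq 1.2}; testing this untruncated equation against $u^-=\min\{u_{\lambda,\mu},0\}$ gives $\|u^-\|_\lambda^2+\mu\int_{\R^3}\phi_{u_{\lambda,\mu}}(u^-)^2\,dx=0$, so $u_{\lambda,\mu}\ge0$, and elliptic regularity with the strong maximum principle make it strictly positive. Finally the Nehari identity $\|u_{\lambda,\mu}\|_\lambda^2\le\int_{\R^3}|u_{\lambda,\mu}^+|^p\,dx\le C\|u_{\lambda,\mu}\|_\lambda^p$ supplies the lower bound $\|u_{\lambda,\mu}\|_\lambda\ge\tau:=C^{-1/(p-2)}$, completing \eqref{eq 1.4}. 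The step I expect to fight hardest with is the parameter-dependent compactness: fixing $\lambda^\ast$ so that the Palais--Smale condition holds at the merely uniformly bounded level $c_{\lambda,\mu}$ for all small $\mu$ at once, which is precisely what decouples the choices of $\lambda^\ast$ and $\mu_\ast$.
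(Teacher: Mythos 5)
Your proposal is correct in substance and runs on the same machinery as the paper --- a norm cutoff on the nonlocal term, a mountain-pass argument with a parameter-uniform bound on the level, and the parameter-dependent compactness lemma for large $\lambda$ --- but its internal architecture differs in two ways, and each difference has a cost that the paper's arrangement is designed to avoid. First, you place the path endpoint beyond the truncation radius ($\|e\|_\lambda\ge T_1$) so that its negativity is $\mu$-free; the paper instead fixes $e_0\in C_0^\infty(\Omega)$ \emph{independently of the truncation parameter} (Lemma \ref{lem 3.2}) and pays with the smallness condition $\mu<\mu^\ast$. Your choice creates the circular dependence you acknowledge with ``chosen consistently'': your level bound $C_0$ depends on $T_1$, since on the portion of the path where $h>0$ the nonlocal term is only controlled by $C\mu T_1^4$, while $T_1$ must in turn exceed $T\sim\sqrt{C_0}$. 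This is resolvable --- the $\mu$-independent part of $C_0$, namely $\max_{t\ge0}\bigl(\tfrac{t^2}{2}|\nabla w|_2^2-\tfrac{t^p}{p}|w|_p^p\bigr)$, does not involve $T_1$, so one fixes $T_1$ from that quantity first and then shrinks $\mu_\ast$ (depending on $T_1$) to make the $C\mu T_1^4$ correction harmless --- but this ordering of constants is the crux of the truncation method and must be written out; the paper sidesteps it entirely because its $M$ (Lemma \ref{lem 3.3}) is independent of $T$, and only afterwards is $T:=\sqrt{2p(M+1)/(p-2)}$ defined from $M$ (Lemma \ref{lem 3.4}). Second, you prove Palais--Smale for the \emph{truncated} functional and de-truncate only at the limiting critical point, whereas the paper shows the Cerami sequence itself satisfies $\|u_n\|_\lambda\le T$ (Lemma \ref{lem 3.4}), so that compactness (Lemma \ref{lem 3.5}) is run on the clean functional $I_{\lambda,\mu}$. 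In your order the derivative carries the extra terms $\mu h(\|u_n\|_\lambda^2)\int_{\R^3}\phi_{u_n}u_nv\,dx$ and $\tfrac{\mu}{2}h'(\|u_n\|_\lambda^2)N(u_n)\langle u_n,v\rangle_\lambda$, where $N(u)=\int_{\R^3}\phi_uu^2\,dx$: you must keep the coefficient $1+\tfrac{\mu}{2}h'(\|u_n\|_\lambda^2)N(u_n)$ bounded away from zero (yet another smallness condition on $\mu$, again depending on $T_1$), and you need a Brezis--Lieb splitting of $|u_n^+|_p^p$ so that the final inequality is homogeneous of degree two in $\|u_n-u\|_\lambda$; testing naively against $u_n-u$ only yields $\limsup_n\|u_n-u\|_\lambda\lesssim(\lambda b)^{-\theta/2}$, which is smallness, not convergence. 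Two minor slips: your opening motivation is inaccurate, since for small $\mu$ the untruncated $I_{\lambda,\mu}$ \emph{does} have the mountain-pass geometry (the estimate of Lemma \ref{lem 3.2} applies to it verbatim because $\eta\le1$); what truncation buys is boundedness of the Cerami sequence, as you correctly say later. And before invoking the strong maximum principle and the Nehari lower bound you should record that $u_{\lambda,\mu}\neq0$, which follows from $I_{\lambda,\mu}(u_{\lambda,\mu})=c_{\lambda,\mu}\ge\alpha>0$.
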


\begin{remark}
\indent \rm
\begin{itemize}
  \item [\rm(i)] We note that, if $u\in H_0^1(\Omega)$ is a nontrivial
   solution of the following equation
   \vskip -0.5 true cm
   \begin{equation*}
     - \Delta u + \mu \phi_u u =|u|^{p-2}u \quad \text{in  $\Omega$,}
   \end{equation*}
   \vskip -0.2 true cm
   then by zero continuation, $u$ is also a nontrivial solution of \eqref{eq 1.2}
    for all $\lambda>0$. So, we are interested in seeking for the positive solution
    of \eqref{eq 1.2}, and obviously it does not lie in $H_0^1(\Omega)$.

  \item [\rm(ii)]   Theorem \ref{th 1.1} seems to be the first existence result of solutions for
    \eqref{eq 1.2} in the case where $2< p <4$, and it implies that \eqref{eq 1.2} has at least
    a positive solution for $\lambda$ large and $\mu$ small.
\end{itemize}
\end{remark}

It is also interesting to know whether \eqref{eq 1.2} has a nontrivial solution for $\lambda$ large and $\mu$ large.
The following theorem gives an explicit answer for the case $2<p\leq3$.

\begin{theorem}\label{th 1.2}
Suppose that $(V_{1})$--$(V_{3})$ hold.
\begin{enumerate}
  \item [\rm(i)] If $2<p<3$ and $\left|\mathcal{V}_b\right| < S^{\frac{3}{2}}$,
  \eqref{eq 1.2} has no nontrivial solution in $E_\lambda$ for all $\lambda \geq 1/b$ and
  $\mu \geq 1 / \bigl[4(1-\left|\mathcal{V}_b\right|^{\frac{2}{3}}S^{-1})\bigr]$.
  Here $S$ is the best constant for the embedding $D^{1,2}(\R^3) \hookrightarrow L^6(\R^3)$.

  \item [\rm(ii)] If $ p=3 $, \eqref{eq 1.2} has no nontrivial solution in $E_\lambda$ for all
  $\lambda >0$ and $\mu > 1/4$.
\end{enumerate}
\end{theorem}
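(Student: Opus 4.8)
The plan is to prove both nonexistence statements by testing \eqref{eq 1.2} against the solution $u$ itself and then comparing the resulting energy identity with a single structural inequality that bounds the cubic term by the nonlocal energy. Testing with $u$ yields
\begin{equation*}
\int_{\R^3}|\nabla u|^2\,dx+\lambda\int_{\R^3}V(x)u^2\,dx+\mu\int_{\R^3}\phi_u u^2\,dx=\int_{\R^3}|u|^p\,dx,
\end{equation*}
so, abbreviating $A=\int_{\R^3}|\nabla u|^2\,dx$, $C=\lambda\int_{\R^3}V(x)u^2\,dx\ge0$ and $B=\int_{\R^3}\phi_u u^2\,dx$, any nontrivial $u\in E_\lambda$ has $A>0$ and $B>0$. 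The key ingredient I would establish first is the inequality $\int_{\R^3}|u|^3\,dx\le A^{1/2}B^{1/2}$. This follows from $-\Delta\phi_u=u^2$: integrating by parts gives $\int_{\R^3}|u|^3\,dx=\int_{\R^3}|u|(-\Delta\phi_u)\,dx=\int_{\R^3}\nabla|u|\cdot\nabla\phi_u\,dx$, and then Cauchy--Schwarz together with $|\nabla|u||=|\nabla u|$ a.e. and $\int_{\R^3}|\nabla\phi_u|^2\,dx=\int_{\R^3}\phi_u u^2\,dx=B$ completes it.

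For part (ii) the exponent is exactly $p=3$, so the two displays combine immediately: dropping $C\ge0$ gives $A+\mu B\le\int_{\R^3}|u|^3\,dx\le A^{1/2}B^{1/2}$, while the arithmetic--geometric mean inequality gives $A+\mu B\ge2\sqrt{\mu}\,A^{1/2}B^{1/2}$. When $A^{1/2}B^{1/2}>0$ these force $2\sqrt{\mu}\le1$, i.e.\ $\mu\le1/4$; since $\mu>1/4$ we must have $u\equiv0$. No condition on $\lambda$ or on $|\mathcal{V}_b|$ enters here, in agreement with the statement.

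For part (i) with $2<p<3$ the extra task is to control the subcritical term $\int|u|^p$ by the cubic term plus an absorbable remainder. Using $p<3$ pointwise, $|u|^p-|u|^3\le0$ on $\{|u|\ge1\}$ while $|u|^p\le u^2$ on $\{|u|<1\}$, so $\int_{\R^3}|u|^p\,dx\le\int_{\R^3}|u|^3\,dx+\int_{\R^3}u^2\,dx$, with strict inequality for $u\not\equiv0$. I would then split $\int u^2$: on $\mathcal{V}_b$, H\"older and the Sobolev inequality give $\int_{\mathcal{V}_b}u^2\,dx\le|\mathcal{V}_b|^{2/3}S^{-1}A$, while on $\{V\ge b\}$ the hypothesis $\lambda\ge1/b$ gives $\int_{\{V\ge b\}}u^2\,dx\le\lambda\int_{\R^3}V(x)u^2\,dx=C$, so that $\int u^2\le\kappa A+C$ with $\kappa=|\mathcal{V}_b|^{2/3}S^{-1}$, which is $<1$ precisely because $|\mathcal{V}_b|<S^{3/2}$. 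Combining these with the cubic inequality gives $\int|u|^p<A^{1/2}B^{1/2}+\kappa A+C$; inserting this into the identity and cancelling $C$ yields $(1-\kappa)A+\mu B<A^{1/2}B^{1/2}$. Since the arithmetic--geometric mean inequality gives $(1-\kappa)A+\mu B\ge2\sqrt{(1-\kappa)\mu}\,A^{1/2}B^{1/2}$, we obtain $2\sqrt{(1-\kappa)\mu}<1$, contradicting $\mu\ge1/[4(1-\kappa)]$ unless $u\equiv0$.

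I expect the case $2<p<3$ to be the main obstacle. For $p=3$ the superquadratic term is literally the cubic term already controlled by the nonlocal inequality, but for $p<3$ it is not directly comparable to the nonlocal energy; the argument rests on the elementary comparison $\int|u|^p\le\int|u|^3+\int u^2$ together with the measure--Sobolev splitting of $\int u^2$, and it is exactly this splitting that manufactures the factor $1-\kappa$, and hence the threshold $1/[4(1-\kappa)]$ and the measure restriction $|\mathcal{V}_b|<S^{3/2}$. The strictness of the pointwise comparison, which holds because a nonzero $u\in E_\lambda$ vanishes at infinity and so satisfies $0<|u|<1$ on a set of positive measure, is what allows me to include the endpoint $\mu=1/[4(1-\kappa)]$.
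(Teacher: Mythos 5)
Your proof is correct and takes essentially the same route as the paper: both test the equation with $u$, exploit the identity $\int_{\R^3}|u|^3\,dx=\int_{\R^3}\nabla\phi_u\cdot\nabla|u|\,dx$ coming from $-\Delta\phi_u=u^2$, split $\int_{\R^3}u^2\,dx$ over $\mathcal{V}_b$ and its complement via H\"older--Sobolev and $\lambda\geq 1/b$, and for $2<p<3$ use the pointwise comparison $t^p\leq t^2+t^3$. The only cosmetic difference is that you retain the sharp Cauchy--Schwarz bound $\int_{\R^3}|u|^3\,dx\leq \bigl(\int_{\R^3}|\nabla u|^2\,dx\bigr)^{1/2}\bigl(\int_{\R^3}\phi_u u^2\,dx\bigr)^{1/2}$ and apply the AM--GM inequality at the end (handling the endpoint value of $\mu$ through strictness of the pointwise comparison), whereas the paper applies Young's inequality with weight $\mu$ at the outset and concludes from the positivity of $h(t)=t^2+t^3-t^p$ --- the same algebra in a different order.
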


\begin{remark}
It is still an open question whether \eqref{eq 1.2}  has a nontrivial solution for $\lambda$ large and $\mu$ large
in the case $3< p <4$, which is under consideration in my following work.
\end{remark}

We are now in a position to give the main idea of the proof of Theorem \ref{th 1.1}.
If we apply the Mountain Pass Theorem directly to the functional $I_{\lambda,\mu}$,
we may then obtain a Cerami sequence for $\mu>0$ sufficiently small. However,
the boundedness of this Cerami sequence becomes a major difficulty as noted before.
To get around this obstacle, we shall use the truncation technique as e.g. in \cite{Liy2012}.
More precisely, for each $T>0$  we move to study the truncated functional
$I^{T}_{\lambda,\mu}: E_\lambda \rightarrow \R$ defined by
\begin{equation*}
  I^T_{\lambda,\mu} (u) = \frac{1}{2}\|u\|^2_{\lambda} + \frac{\mu}{4}\eta\left(\|u\|^2_{\lambda}/T^2\right)
   \int_{\mathbb{R}^{3}}\phi_uu^{2}\:dx - \frac{1}{p}\int_{\mathbb{R}^{3}}|u^+|^p\: dx,
\end{equation*}
where $\eta$ is a smooth cut-off function such that
\begin{align*}
  \eta\left(\|u\|^2_{\lambda}/T^2\right)=
  \left\{
    \begin{array}{ll}
      1  \quad & \hbox{if\, $\|u\|_{\lambda}\leq T$,} \vspace{0.15cm}\\
      0  & \hbox{if\,  $\|u\|_{\lambda}\geq \sqrt{2} T$.}
    \end{array}
  \right.
\end{align*}
At this point, we wish to outline the proof of Theorem \ref{th 1.1}.
First we show that the truncated functional $I^T_{\lambda,\mu}$
has the mountain pass geometry for $\mu>0$ sufficiently small,
and thus obtain a Cerami sequence $\{u_n\}$ of $I^T_{\lambda,\mu}$
at the mountain pass level $c^T_{\lambda,\mu}$. We then give a key observation that
$c^T_{\lambda,\mu}$ has an upper bound independent of $T, \,\lambda$ and $\mu$.
From this observation, we may follow the standard truncation argument
to deduce that for a given $T>0$ properly, after passing to a subsequence,
$\|u_n\|_{\lambda}\leq T$ for all $n \in \N$ by restricting $\mu>0$ sufficiently small again,
and so $\{u_n\}$ is a bounded Cerami sequence of $I_{\lambda,\mu}$, i.e.,
\begin{equation*}
  \sup_{n \in \N}\|u_n\|_{\lambda}\leq T, \qquad I_{\lambda,\mu}(u_{n})\rightarrow c^T_{\lambda,\mu}
   \qquad \text{and} \qquad
  (1+\|u_n\|_\lambda)\left\|I'_{\lambda,\mu}(u_{n})\right\|_{E'_\lambda}\rightarrow 0,
\end{equation*}
where $E'_\lambda$ is the dual space of $E_\lambda$.
Finally, by using the parameter-dependent compactness lemma, for $\lambda>0$ sufficiently large
we may pass to a subsequence of $\{u_n\}$ which converges to $u_{\lambda,\mu}$ in $E_\lambda$.
Therefore, $u_{\lambda,\mu}$ is a positive solution of \eqref{eq 1.2} with
$\|u_{\lambda,\mu}\|_{\lambda}\leq T$ and $I_{\lambda,\mu}(u_{\lambda,\mu})= c^T_{\lambda,\mu}$.

Next, we would like to explore the decay of the positive solutions at infinity.
Since it is possible that $\liminf_{|x|\rightarrow \infty} V(x)=0$ in our
setting, we need to replace $(V_2)$ by the following condition:
\vspace{0.2cm}

\noindent \hspace{0.2 cm}
$(V'_{2})$  There exists $b > 0$ such that $\mathcal{V}_b:=\{x \in \R^3:
           \: V(x)<b\}$ is nonempty and bounded.
\vspace{0.2cm}

\noindent
It is easy to see that $(V'_{2})$ is stronger than $(V_{2})$. Thus, under the
assumptions of Theorem \ref{th 1.1} with $(V_2)$ replaced by $(V'_{2})$,
the conclusions of Theorem \ref{th 1.1} still hold.
There are indeed many functions
satisfying $(V_{1})$, $(V'_{2})$ and $(V_{3})$. Here we give two examples.
one example is a bounded potential function:
\begin{equation*}
  V(x)=\left\{
    \begin{array}{ll}
      \: 0                  & \hbox{if\, $|x| \leq 1$,}\vspace{0.15cm}\\
      \bigl(|x|-1\bigr)^2    \quad  & \hbox{if\, $1< |x| \leq2$,} \vspace{0.15cm}\\
      \:1                  & \hbox{if\, $|x| > 2$.}
    \end{array}
  \right.
\end{equation*}
Another example is a unbounded potential function:
\begin{equation*}
  V(x)=\left\{
    \begin{array}{ll}
      \: 0                  & \hbox{if\, $|x| \leq 1$,}\vspace{0.15cm}\\
      \bigl(|x|-1\bigr)^2    \quad  & \hbox{if\, $|x|> 1$.}
    \end{array}
  \right.
\end{equation*}

Now we are ready to investigate the decay rate of the positive solutions at infinity.
The following result shows that the positive solutions of \eqref{eq 1.2}
decay exponentially as $|x|\rightarrow \infty$.

\begin{theorem}\label{th 1.3}
Suppose that $2<p<4$, $(V_{1})$, $(V'_{2})$ and $(V_{3})$ hold.
Let $u_{\lambda,\mu}$ be the positive solution of equation \eqref{eq 1.2} satisfying \eqref{eq 1.4}
for each $\lambda \in (\lambda^\ast,\,\infty)$ and $\mu \in (0,\,\mu_\ast)$.
Then there exists $\Lambda^{\ast}>\lambda^\ast $ such that
for each $\lambda \in (\Lambda^{\ast},\,\infty)$ and $\mu \in (0,\,\mu_\ast)$, we have
\begin{equation*}
  u_{\lambda,\mu} (x) \leq  A \lambda^{- 1/2} \exp \left(-\beta \lambda^{1/2}
   (|x|-R)\right)  \qquad \text{for all\: $|x| > R$}
\end{equation*}
with constants $A,\, \beta, \, R>0$ independent of $\lambda$ and $\mu$.
\end{theorem}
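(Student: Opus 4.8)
The plan is to obtain the estimate by a comparison (barrier) argument on the exterior region $\{|x|>R\}$, exploiting that $(V'_2)$ makes $\mathcal{V}_b$ bounded (so $V\geq b$ outside a fixed ball) together with the steepness of the well to produce the prefactor $\lambda^{-1/2}$. First I would record two uniform a priori estimates for $u:=u_{\lambda,\mu}$. From the embedding $E_\lambda\hookrightarrow H^1(\R^3)$, which holds with a $\lambda$-independent constant for $\lambda\geq1$, and the bound $\|u_{\lambda,\mu}\|_\lambda\leq T$ from \eqref{eq 1.4}, one gets $\|u\|_{H^1(\R^3)}\leq CT$ uniformly in $\lambda,\mu$. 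Since $u>0$ and the potential and nonlocal terms are nonnegative, the equation \eqref{eq 1.2} yields $-\Delta u\leq u^{p-1}$; as $p-1<5$ is subcritical, Moser/Brezis--Kato iteration gives a uniform bound $\|u\|_{L^\infty}\leq M$ with $M$ independent of $\lambda$ and $\mu$.

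Next I would fix $R>0$ with $\overline{\mathcal{V}_b}\subset B_{R-1}$, possible by $(V'_2)$, so that $V(x)\geq b$ for $|x|\geq R-1$. Setting $\Lambda^\ast:=\max\{\lambda^\ast,\,2M^{p-2}/b\}$, for $\lambda>\Lambda^\ast$ and $|x|>R-1$ the bounds $V\geq b$, $\mu\phi_u u\geq0$ and $u^{p-1}\leq M^{p-2}u$ give
\begin{equation*}
  -\Delta u + \frac{\lambda b}{2}\,u \;\leq\; -\Delta u + (\lambda b - M^{p-2})\,u \;\leq\; 0 ,
\end{equation*}
so $u$ is a positive subsolution of $L:=-\Delta+\frac{\lambda b}{2}$ on the exterior region, and in particular $u$ is subharmonic there. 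The gain of the factor $\lambda^{-1/2}$ then comes from coupling the mean-value inequality for subharmonic functions with the steep-well bound: from $\|u\|_\lambda\leq T$ and $V\geq b$ one has $\int_{\{|x|>R-1\}}u^2\leq\frac{1}{\lambda b}\int\lambda V u^2\leq \frac{T^2}{\lambda b}$, whence, with a universal constant, $\sup_{|x|=R}u\leq C\|u\|_{L^2(\{R-1<|x|<R+1\})}\leq A\lambda^{-1/2}$, and likewise $u(x)\to0$ as $|x|\to\infty$ because $\|u\|_{L^2(B_1(x))}\to0$.

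Finally I would use as a barrier the radial function $w(x)=A\lambda^{-1/2}\exp\!\bigl(-\beta\lambda^{1/2}(|x|-R)\bigr)$ with $\beta=\sqrt{b/2}$. A direct computation, using $(\beta\lambda^{1/2})^2=\tfrac{\lambda b}{2}$, gives
\begin{equation*}
  -\Delta w + \frac{\lambda b}{2}\,w \;=\; \frac{2\beta\lambda^{1/2}}{|x|}\,w \;>\;0 ,
\end{equation*}
so $Lw\geq0\geq Lu$ on $\{|x|>R\}$, while $u\leq w$ on $\{|x|=R\}$ and $u-w\to0$ at infinity. The weak maximum principle for $L$ on the exterior domain — legitimate precisely because the zeroth-order coefficient $\frac{\lambda b}{2}$ is strictly positive and both functions vanish at infinity — then forces $u\leq w$ throughout $\{|x|>R\}$, which is the asserted estimate with $A,\beta,R$ independent of $\lambda$ and $\mu$.

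The step I expect to be the main obstacle is making every constant genuinely independent of $\lambda$ and $\mu$: in particular the uniform $L^\infty$ bound $M$, so that $\Lambda^\ast$ and hence the fixed region $\{|x|>R\}$ do not drift with $\lambda$, and the extraction of the $\lambda^{-1/2}$ prefactor, which rests on combining the subharmonic local-sup estimate (universal constant) with the well-induced $L^2$ decay $\int_{\{|x|>R-1\}}u^2\leq T^2/(\lambda b)$. Some care is also needed to justify the comparison rigorously on the unbounded exterior domain rather than passing through an exhaustion by annuli.
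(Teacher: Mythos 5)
Your proposal is correct and follows essentially the same three-step route as the paper: a uniform $L^\infty$ bound via Moser iteration (the paper's Lemma \ref{lem 4.1}), a local sup estimate combined with the steep-well bound $\int_{\{V\geq b\}}u^2\leq T^2/(\lambda b)$ to get $u\leq A\lambda^{-1/2}$ near $|x|=R$ (the paper's Lemma \ref{lem 4.2}, which cites \cite[Theorem 8.17]{Gilbarg1983} where you invoke the subharmonic mean-value inequality), and finally the same exponential barrier $A\lambda^{-1/2}\exp(-\beta\lambda^{1/2}(|x|-R))$ with $\beta^2=b/2$ compared via the maximum principle on the exterior domain. Your explicit attention to the behavior at infinity when applying the comparison on the unbounded region is a point the paper passes over silently, but it is a refinement of the same argument, not a different one.
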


\begin{remark}
\rm
The similar work on Schr\"{o}dinger equations can be found in \cite[Theorem 1.3]{Bartsch2001}.
We wish to point out that although the idea of the proof of Theorem \ref{th 1.3}
is inspired by \cite{Bartsch2001},  the adaptation procedure to our problem
is not trivial at all due to the the presence of the nonlocal term.
\end{remark}

Finally, we study the asymptotic behavior of the positive solutions as $\lambda\rightarrow \infty$
and $\mu \rightarrow 0$. By means of \eqref{eq 1.4}, we have the following results.

\begin{theorem}\label{th 1.4}
Let $u_{\lambda,\mu}$ be the positive solutions of \eqref{eq 1.2} obtained by Theorem \ref{th 1.1}.
Then for each $\mu \in (0,\, \mu_\ast)$ fixed,
$u_{\lambda,\mu}\rightarrow u_{\mu}$ in $H^{1}(\mathbb{R}^{3})$
as $\lambda \rightarrow \infty$ up to a subsequence,
where $u_{\mu} \in H_{0}^{1}(\Omega)$ is a positive solution of
\begin{equation*}
\begin{cases}
   - \Delta u + \mu \phi_u u  =|u|^{p-2}u  \ &\text{in $\Omega$}, \vspace{0.05cm}\\
   u =0   & \text{on  $\partial\Omega$}.
\end{cases}\eqno{(\mathcal{P}_{\infty,\mu})}
\end{equation*}
\end{theorem}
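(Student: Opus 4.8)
The plan is to run a concentration argument driven by the uniform bound \eqref{eq 1.4}, keeping $\mu \in (0,\mu_\ast)$ fixed and letting $\lambda\to\infty$ along a sequence $\lambda_n\to\infty$; write $u_n:=u_{\lambda_n,\mu}$. First I would record that, for $\lambda$ large, the norm $\|\cdot\|_\lambda$ dominates the $H^1(\R^3)$-norm with a constant independent of $\lambda$ (this follows from $(V_1)$, $(V_2)$ and the finiteness of $|\mathcal{V}_b|$, and is the coercivity estimate underlying the definition of $E_\lambda$). Hence \eqref{eq 1.4} gives $\sup_n\|u_n\|_{H^1(\R^3)}\le CT$, so up to a subsequence $u_n\rightharpoonup u_\mu$ weakly in $H^1(\R^3)$, strongly in $L^q_{loc}(\R^3)$ for $2\le q<6$, and a.e. in $\R^3$.

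The next step is to show $u_\mu\in H_0^1(\Omega)$. Since $V\ge 0$ and $\|u_n\|_{\lambda_n}^2\le T^2$, I have $\int_{\R^3}Vu_n^2\,dx\le T^2/\lambda_n\to 0$; applying Fatou's lemma to $Vu_n^2\ge 0$ forces $\int_{\R^3}Vu_\mu^2\,dx=0$, so $u_\mu=0$ a.e. on $\{V>0\}=\R^3\setminus\overline{\Omega}$. As $u_\mu\in H^1(\R^3)$ vanishes a.e. outside $\overline{\Omega}$ and $\partial\Omega$ is locally Lipschitz by $(V_3)$, a standard extension argument yields $u_\mu\in H_0^1(\Omega)$. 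Then I would pass to the limit in the weak formulation of \eqref{eq 1.2}: for $\varphi\in C_c^\infty(\Omega)$ the potential term $\lambda_n\int Vu_n\varphi$ vanishes identically (as $V\equiv 0$ on $\Omega$), the gradient term converges by weak convergence, and $\int|u_n|^{p-2}u_n\varphi$ converges by local compactness since $2<p<6$. For the nonlocal term I note that $u_n^2$ is bounded in $L^{6/5}(\R^3)$, so $\phi_{u_n}$ is bounded in $D^{1,2}(\R^3)$ with $\phi_{u_n}\rightharpoonup\phi_{u_\mu}$ and strongly in $L^s_{loc}$, giving $\mu\int\phi_{u_n}u_n\varphi\to\mu\int\phi_{u_\mu}u_\mu\varphi$. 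This shows $u_\mu$ is a weak solution of $(\mathcal{P}_{\infty,\mu})$.

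The main obstacle is upgrading weak convergence to strong convergence in $H^1(\R^3)$ while ruling out loss of mass at infinity. Here I would invoke the parameter-dependent compactness lemma together with the two-region splitting $\R^3=\mathcal{V}_b\cup\mathcal{V}_b^c$: on $\mathcal{V}_b^c=\{V\ge b\}$ one has $\int_{\mathcal{V}_b^c}u_n^2\le T^2/(\lambda_n b)\to 0$, while on the finite-measure set $\mathcal{V}_b$ interpolation between $L^2$ and $L^6$ controls the tails, so that $u_n\to u_\mu$ strongly in $L^q(\R^3)$ for $2\le q<6$. In particular $\int|u_n|^p\to\int|u_\mu|^p$, the nonlocal energy $\mu\int\phi_{u_n}u_n^2\to\mu\int\phi_{u_\mu}u_\mu^2$ (using strong $L^{12/5}$ convergence), and $\int\lambda_n Vu_n^2\to 0$. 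Testing \eqref{eq 1.2} against $u_n$ and comparing the resulting energy identity with that of the limit equation then yields $\|\nabla u_n\|_2\to\|\nabla u_\mu\|_2$, which combined with weak convergence of the gradients and the $L^2$ convergence gives $u_n\to u_\mu$ in $H^1(\R^3)$.

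Finally, nontriviality and positivity follow from the lower bound in \eqref{eq 1.4}: testing \eqref{eq 1.2} against $u_n$ gives $\int|u_n|^p=\|u_n\|_{\lambda_n}^2+\mu\int\phi_{u_n}u_n^2\ge\tau^2>0$, whence $\int|u_\mu|^p\ge\tau^2>0$ and $u_\mu\neq 0$. Since each $u_n>0$ we have $u_\mu\ge 0$, and the strong maximum principle upgrades this to $u_\mu>0$ in $\Omega$, so $u_\mu$ is a positive solution of $(\mathcal{P}_{\infty,\mu})$, as claimed. I expect the delicate point to be the strong $L^q(\R^3)$ convergence of Step three, since it is precisely there that the steepness of the well (large $\lambda_n$) must be exploited to prevent mass from persisting in $\{V>0\}$ or escaping to infinity; the nonlocal term does not create new difficulties once this compactness is secured, because $\int\phi_u u^2$ is continuous with respect to strong $L^{12/5}$ convergence.
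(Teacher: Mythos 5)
Your proposal is correct and follows essentially the same route as the paper's proof: uniform bounds from \eqref{eq 1.4}, weak convergence plus Fatou's lemma to place the limit in $H_{0}^{1}(\Omega)$, strong $L^{q}(\mathbb{R}^{3})$ convergence exploiting the steepness of the well and the finite measure of $\mathcal{V}_b$, an energy-identity comparison (with weak lower semicontinuity of the gradient norm, which also forces $\lambda_n\int V u_n^2\to 0$ --- note this does not follow from the $L^q$ convergence alone, as your ``in particular'' suggests) to upgrade to strong $H^{1}(\mathbb{R}^{3})$ convergence, and the lower bound $\tau$ plus the strong maximum principle for positivity. The only cosmetic differences are that the paper proves the strong $L^{s}(\mathbb{R}^{3})$ convergence by contradiction via Lions' vanishing lemma where you argue directly with H\"older's inequality on the finite-measure set $\mathcal{V}_b$, and it uses Fatou's lemma for the nonlocal term and the identity $\langle I'_{\lambda_n,\mu}(u_n),u_\mu\rangle=0$ where you first pass to the limit equation and then test it with $u_\mu$; both variants are valid.
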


\begin{theorem}\label{th 1.5}
Let $u_{\lambda,\mu}$ be the positive solutions of \eqref{eq 1.2}
obtained by Theorem \ref{th 1.1}. Then for each $\lambda \in (\lambda^\ast,\,\infty)$ fixed,
$u_{\lambda,\mu}\rightarrow u_{\lambda}$ in $E_\lambda$
as $\mu \rightarrow 0$ up to a subsequence, where $u_{\lambda} \in E_\lambda$
is a positive solution of
\begin{equation*}
  \begin{cases}
    - \Delta u +\lambda V(x)u = |u|^{p-2}u \ \quad  \text{in $\mathbb{R}^{3}$},\vspace{0.1cm}\\
    u \in H^{1}(\mathbb{R}^{3}).
 \end{cases}\eqno{(\mathcal{P}_{\lambda,0})}
\end{equation*}
\end{theorem}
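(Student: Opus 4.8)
The plan is to exploit the $\mu$-independent bounds in \eqref{eq 1.4} to pass to the limit $\mu \to 0$ along a subsequence. Fix $\lambda \in (\lambda^\ast,\infty)$ and let $\mu \to 0$. By the right-hand inequality in \eqref{eq 1.4}, the family $\{u_{\lambda,\mu}\}$ is bounded in $E_\lambda$ by the constant $T$, uniformly in $\mu$; since $E_\lambda$ is a Hilbert space, after passing to a subsequence we may assume $u_{\lambda,\mu} \rightharpoonup u_\lambda$ weakly in $E_\lambda$. With $\lambda$ fixed and large, the compact embedding $E_\lambda \hookrightarrow L^s(\R^3)$ for $2 \leq s < 6$ (established earlier) then yields $u_{\lambda,\mu} \to u_\lambda$ strongly in $L^s(\R^3)$ for every such $s$, as well as $u_{\lambda,\mu} \to u_\lambda$ a.e. in $\R^3$. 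In particular, since each $u_{\lambda,\mu}$ is positive, the limit satisfies $u_\lambda \geq 0$.

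Next I would show that $u_\lambda$ is a weak solution of $(\mathcal{P}_{\lambda,0})$ by passing to the limit in the weak formulation of \eqref{eq 1.2}. For any $\varphi \in E_\lambda$,
\begin{equation*}
  \int_{\R^3}\bigl(\nabla u_{\lambda,\mu}\cdot\nabla\varphi + \lambda V u_{\lambda,\mu}\varphi\bigr)\,dx + \mu\int_{\R^3}\phi_{u_{\lambda,\mu}}u_{\lambda,\mu}\varphi\,dx = \int_{\R^3}|u_{\lambda,\mu}^+|^{p-2}u_{\lambda,\mu}^+\varphi\,dx.
\end{equation*}
The first integral converges to $\int_{\R^3}(\nabla u_\lambda\cdot\nabla\varphi + \lambda V u_\lambda\varphi)\,dx$ by the weak convergence in $E_\lambda$, and the right-hand side converges to $\int_{\R^3}|u_\lambda^+|^{p-2}u_\lambda^+\varphi\,dx$ by the strong $L^p$ convergence. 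The crucial point is the nonlocal term: using the standard estimate $\int_{\R^3}\phi_u u\varphi\,dx \leq C\|u\|^3_{L^{12/5}}\|\varphi\|_{L^{12/5}}$ together with the uniform bound on $\|u_{\lambda,\mu}\|_\lambda$ (and the comparability of $\|\cdot\|_\lambda$ and $\|\cdot\|_{H^1}$ for $\lambda$ large), this integral stays bounded, so the prefactor $\mu \to 0$ forces $\mu\int_{\R^3}\phi_{u_{\lambda,\mu}}u_{\lambda,\mu}\varphi\,dx \to 0$. Hence $u_\lambda$ satisfies $-\Delta u_\lambda + \lambda V u_\lambda = |u_\lambda|^{p-2}u_\lambda$ weakly, i.e. $u_\lambda$ solves $(\mathcal{P}_{\lambda,0})$.

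To upgrade weak to strong convergence I would compare norms. Testing \eqref{eq 1.2} with $u_{\lambda,\mu}$ gives $\|u_{\lambda,\mu}\|_\lambda^2 + \mu\int_{\R^3}\phi_{u_{\lambda,\mu}}u_{\lambda,\mu}^2\,dx = \int_{\R^3}|u_{\lambda,\mu}^+|^p\,dx$, while testing $(\mathcal{P}_{\lambda,0})$ with $u_\lambda$ gives $\|u_\lambda\|_\lambda^2 = \int_{\R^3}|u_\lambda^+|^p\,dx$. Since the nonlocal term again vanishes as $\mu\to 0$ and $\int|u_{\lambda,\mu}^+|^p\,dx \to \int|u_\lambda^+|^p\,dx$, we get $\|u_{\lambda,\mu}\|_\lambda^2 \to \|u_\lambda\|_\lambda^2$; combined with weak convergence in the Hilbert space $E_\lambda$, convergence of norms yields $u_{\lambda,\mu} \to u_\lambda$ strongly in $E_\lambda$. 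Then the lower bound in \eqref{eq 1.4} gives $\|u_\lambda\|_\lambda = \lim_{\mu\to 0}\|u_{\lambda,\mu}\|_\lambda \geq \tau > 0$, so $u_\lambda \not\equiv 0$; since $u_\lambda \geq 0$ solves $-\Delta u_\lambda + \lambda V u_\lambda = u_\lambda^{p-1}$, the strong maximum principle forces $u_\lambda > 0$ in $\R^3$. The main obstacle is ensuring that the nonlocal term truly disappears in the limit, which is exactly where the $\mu$-uniform boundedness furnished by \eqref{eq 1.4} — and hence the truncation machinery behind Theorem \ref{th 1.1} — is indispensable; the remaining steps are routine once this uniform control is in hand.
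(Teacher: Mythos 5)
Your overall skeleton (uniform bound from \eqref{eq 1.4}, weak limit, identification of the limit equation, norm convergence, maximum principle) matches the paper's, but there is a genuine gap at the step where you claim ``the compact embedding $E_\lambda \hookrightarrow L^s(\R^3)$ for $2\le s<6$ (established earlier)''. No such compact embedding is established in the paper, and it is in fact false under $(V_1)$--$(V_3)$: the potential is not assumed coercive, and the paper's own first example ($V\equiv 1$ for $|x|>2$, $V=0$ for $|x|\le 1$) makes $E_\lambda$ coincide with $H^1(\R^3)$ up to an equivalent norm, whose embedding into $L^s(\R^3)$ is not compact (translated bumps $\psi(\cdot-x_n)$ with $|x_n|\to\infty$ are bounded in $\|\cdot\|_\lambda$, converge weakly to $0$, yet have constant $L^s$ norm). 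This gap is not cosmetic: identifying the weak limit as a solution of $(\mathcal{P}_{\lambda,0})$ can be repaired using only weak convergence plus $L^s_{loc}$ compactness, but the strong convergence $u_{\lambda,\mu}\to u_\lambda$ in $E_\lambda$ and, crucially, the nontriviality of $u_\lambda$ cannot: your claim that $\int_{\R^3}|u_{\lambda,\mu}^+|^p\,dx\to\int_{\R^3}|u_\lambda^+|^p\,dx$ rests on global $L^p$ convergence, and without it the lower bound $\|u_{\lambda,\mu}\|_\lambda\ge\tau$ does not pass to the weak limit (weak limits can lose norm, so a priori $u_\lambda$ could vanish identically).

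The paper closes exactly this gap by repeating the argument of Lemma \ref{lem 3.5} instead of invoking any compact embedding: writing $v_n=u_{\lambda,\mu_n}-u_\lambda$, condition $(V_2)$ and the finite measure of $\mathcal{V}_b$ give $|v_n|_2^2\le\frac{1}{\lambda b}\|v_n\|_\lambda^2+o(1)$, the interpolation $|v_n|_p\le|v_n|_2^\theta|v_n|_6^{1-\theta}$ turns this into $|v_n|_p\le d_0(\lambda b)^{-\theta/2}\|v_n\|_\lambda+o(1)$, and subtracting the equations tested against $u_{\lambda,\mu_n}$ and $u_\lambda$ yields
\begin{equation*}
  o(1)\ \ge\ \Bigl[1-\bigl(2d_pT\bigr)^{p-2}d_0^{2}(\lambda b)^{-\theta}\Bigr]\|v_n\|_\lambda^2+o(1),
\end{equation*}
where the bracket is positive precisely because $\lambda>\lambda^\ast$ (the nonlocal contributions $\mu_n\int_{\R^3}\phi_{u_n}u_n^2\,dx\to0$ are absorbed into $o(1)$, as you correctly observe). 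This is the reason the theorem is stated only for $\lambda$ fixed in $(\lambda^\ast,\infty)$: the compactness here is a quantitative, parameter-dependent property extracted from the equation itself, not a property of the space $E_\lambda$. If you replace your compactness claim by this argument, the remainder of your proof goes through.
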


\begin{theorem}\label{th 1.6}
Let $u_{\lambda,\mu}$ be the positive solutions of \eqref{eq 1.2}
obtained by Theorem \ref{th 1.1}. Then $u_{\lambda,\mu}\rightarrow u_{0}$ in $H^{1}(\mathbb{R}^{3})$
as $\mu \rightarrow 0$ and $\lambda \rightarrow \infty$ up to a subsequence,
where $u_{0} \in H_{0}^{1}(\Omega)$ is a positive solution of
\begin{equation*}
   \begin{cases}
      - \Delta u =|u|^{p-2}u  \ &\text{in $\Omega$}, \vspace{0.05cm}\\
      u =0   & \text{on  $\partial\Omega$}.
   \end{cases}\eqno{(\mathcal{P}_{\infty,0})}
\end{equation*}
\end{theorem}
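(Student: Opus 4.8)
The plan is to combine the localization argument of Theorem~\ref{th 1.4} (for $\lambda\to\infty$) with the vanishing of the nonlocal term from Theorem~\ref{th 1.5} (for $\mu\to0$), carrying out both limits simultaneously. Fix arbitrary sequences $\lambda_n\to\infty$ and $\mu_n\to0$ with $\lambda_n>\lambda^\ast$ and $\mu_n\in(0,\mu_\ast)$, and write $u_n:=u_{\lambda_n,\mu_n}$. First I would record a uniform $H^1$ bound: since $V\ge0$ and, for $\lambda\ge\lambda^\ast$, one has $\int_{\R^3}u^2\,dx\le(|\mathcal V_b|^{2/3}S^{-1}+(\lambda b)^{-1})\|u\|_\lambda^2$ (split $\R^3=\mathcal V_b\cup\mathcal V_b^c$, use $V\ge b$ off $\mathcal V_b$ and Sobolev on $\mathcal V_b$), the bound \eqref{eq 1.4} gives $\sup_n\|u_n\|_{H^1}\le C$. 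Hence, up to a subsequence, $u_n\rightharpoonup u_0$ in $H^1(\R^3)$, $u_n\to u_0$ in $L^q_{\mathrm{loc}}(\R^3)$ for $2\le q<6$, and $u_n\to u_0$ a.e.

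Next I would show $u_0\in H_0^1(\Omega)$ solves $(\mathcal P_{\infty,0})$. From \eqref{eq 1.4}, $\int_{\R^3}Vu_n^2\,dx\le T^2/\lambda_n\to0$, so Fatou's lemma gives $\int_{\R^3}Vu_0^2\,dx=0$; since $V>0$ off $\overline\Omega=V^{-1}(0)$ and $\partial\Omega$ is Lipschitz (hence null), $u_0=0$ a.e. on $\R^3\setminus\Omega$, which with $u_0\in H^1(\R^3)$ and $(V_3)$ yields $u_0\in H_0^1(\Omega)$. To identify the equation, I would test the weak form of \eqref{eq 1.2} for $u_n$ against $\varphi\in C_c^\infty(\Omega)$: the term $\lambda_n\int V u_n\varphi$ vanishes identically because $V\equiv0$ on $\mathrm{supp}\,\varphi\subset\Omega$; the nonlocal term is controlled by $\mu_n\|\phi_{u_n}\|_6\|u_n\|_6\|\varphi\|_{3/2}\le C\mu_n\to0$ using $\|\phi_{u_n}\|_6\le C\|u_n\|_{12/5}^2$; the gradient term passes to the limit by weak convergence; and $\int|u_n|^{p-2}u_n\varphi\to\int|u_0|^{p-2}u_0\varphi$ by $L^q_{\mathrm{loc}}$-convergence. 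This gives $\int_\Omega\nabla u_0\cdot\nabla\varphi=\int_\Omega|u_0|^{p-2}u_0\varphi$ for all such $\varphi$, i.e. $u_0$ solves $(\mathcal P_{\infty,0})$.

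The main obstacle is upgrading weak to strong convergence in $H^1(\R^3)$, and the crux is a global $L^q$-compactness statement. I would prove $u_n\to u_0$ in $L^q(\R^3)$ for $2\le q<6$ by splitting $\R^3=\mathcal V_b\cup\mathcal V_b^c$: on $\mathcal V_b^c$ the steep-well estimate gives $\int_{\mathcal V_b^c}u_n^2\le T^2/(\lambda_n b)\to0$, so interpolation with the bounded $L^6$-norm kills the $L^q$-mass there (and $u_0=0$ there); on the finite-measure set $\mathcal V_b$ the tail $\int_{\mathcal V_b\cap B_R^c}|u_n|^q\le|\mathcal V_b\cap B_R^c|^{1-q/6}\,\|u_n\|_6^q$ is small uniformly in $n$ for $R$ large since $|\mathcal V_b|<\infty$, while $u_n\to u_0$ in $L^q(\mathcal V_b\cap B_R)$ by Rellich. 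Granting this, testing the equation for $u_n$ with $u_n$ gives $\int|\nabla u_n|^2\le\int|\nabla u_n|^2+\lambda_n\int Vu_n^2+\mu_n\int\phi_{u_n}u_n^2=\int u_n^p\to\int u_0^p=\int_\Omega|\nabla u_0|^2$, the last equality by testing $(\mathcal P_{\infty,0})$ with $u_0$. Hence $\limsup_n\int|\nabla u_n|^2\le\int_{\R^3}|\nabla u_0|^2$, and weak lower semicontinuity forces $\int|\nabla u_n|^2\to\int|\nabla u_0|^2$; combined with $u_n\to u_0$ in $L^2(\R^3)$ this yields $\|u_n\|_{H^1}\to\|u_0\|_{H^1}$, and therefore $u_n\to u_0$ strongly in $H^1(\R^3)$ (as a by-product the nonnegative terms $\lambda_n\int Vu_n^2$ and $\mu_n\int\phi_{u_n}u_n^2$ tend to $0$). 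Finally I would verify positivity: since $\lambda_n\int Vu_n^2\to0$, $\int_{\R^3}|\nabla u_0|^2=\lim_n\int|\nabla u_n|^2=\lim_n\|u_n\|_{\lambda_n}^2\ge\tau^2>0$, so $u_0\not\equiv0$; as $u_n>0$ we have $u_0\ge0$, and the strong maximum principle applied to $-\Delta u_0=u_0^{p-1}\ge0$ in $\Omega$ gives $u_0>0$ in $\Omega$. I expect the global $L^q$-compactness (balancing the $\lambda_n\to\infty$ decay on $\mathcal V_b^c$ against the finite measure of $\mathcal V_b$) to be the only genuinely delicate point, the remaining steps running parallel to Theorems~\ref{th 1.4} and~\ref{th 1.5}.
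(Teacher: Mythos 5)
Your proof is correct, and it is worth comparing it to what the paper actually does, since the paper proves Theorem \ref{th 1.6} only by reference: ``similar to the proof of Theorem \ref{th 1.4}, left to the reader.'' Your skeleton coincides with that intended adaptation (uniform bound from \eqref{eq 1.4}, weak limit, Fatou's lemma to place $u_0$ in $H_0^1(\Omega)$ via $(V_3)$, global $L^q$-compactness, norm convergence, strong maximum principle), but you diverge at the two technical pivots, both defensibly. First, where the proof of Theorem \ref{th 1.4} gets global $L^s(\R^3)$ convergence ($2<s<6$) by contradiction through Lions' vanishing lemma --- a concentration at points $x_n$ with $|x_n|\to\infty$ must live mostly in $\{V\geq b\}$ because $|\mathcal{V}_b|<\infty$, forcing $\|u_n\|_{\lambda_n}\to\infty$ --- you prove compactness directly: the steep-well estimate $\int_{\mathcal{V}_b^c}u_n^2\leq T^2/(\lambda_n b)$ plus interpolation kills the $L^q$-mass off $\mathcal{V}_b$, while on $\mathcal{V}_b$ the finite measure gives uniform tail smallness and Rellich (applied on balls containing $\mathcal{V}_b\cap B_R$) does the rest. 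The two arguments consume exactly the same structural inputs, but yours avoids concentration-compactness machinery and delivers $L^2(\R^3)$ convergence as well, which cleanly feeds your final Radon--Riesz step. Second, where Theorem \ref{th 1.4} (with $\mu$ fixed) must retain the nonlocal term and control it by the Fatou-type inequality \eqref{eq 5.5}, you exploit $\mu_n\to 0$: by \eqref{eq 2.2}, \eqref{eq 2.1} and \eqref{eq 1.4} the term $\mu_n\int_{\R^3}\phi_{u_n}u_n^2\,dx$ is $O(\mu_n)$, so it simply disappears both when identifying the limit equation against $\varphi\in C_0^\infty(\Omega)$ and in the norm-convergence step, where you compare $\limsup_n\int|\nabla u_n|^2$ with $\int_\Omega|\nabla u_0|^2$ obtained by testing $(\mathcal{P}_{\infty,0})$ with $u_0$ itself. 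This is precisely the simplification the double limit affords; a literal transcription of Theorem \ref{th 1.4}'s proof would do unnecessary work there. In short: same architecture as the paper's intended argument, with a more elementary compactness mechanism and a streamlined treatment of the nonlocal term.
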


\begin{remark}
\indent \rm
\begin{itemize}
  \item [\rm(i)]   Let $\mu>0$ be a small fixed-parameter, Theorem \ref{th 1.4} shows that
     the positive solutions $u_{\lambda,\mu}$ are well localized near the bottom
     of the potential as $\lambda \rightarrow \infty$.
  \vspace{-0.15cm}
  \item [\rm(ii)]  Let $\lambda>0$ be a large fixed-parameter, Theorem \ref{th 1.5} shows that the positive
    solutions of \eqref{eq 1.2} may converge in $E_\lambda$ to a positive solution
    of $(\mathcal{P}_{\lambda,0})$ as $\mu \rightarrow 0$ up to a subsequence.
  \vspace{-0.15cm}
  \item [\rm(iii)] Theorem \ref{th 1.6} shows that the positive solutions of \eqref{eq 1.2}
     may converge in $H^{1}(\mathbb{R}^{3})$ to a positive solution of $(\mathcal{P}_{\infty,0})$
     as $\lambda\rightarrow\infty$ and $\mu \rightarrow 0$ up to a subsequence.
\end{itemize}
\end{remark}

The remainder of this paper is organized as follows.
In Section \ref{section 2}, we set up the  variational framework of
\eqref{eq 1.2} and present some preliminary results. In Section \ref{section 3},
we give the proofs of Theorems \ref{th 1.1} and \ref{th 1.2}.
Section \ref{section 4} is devoted to the proof of Theorem \ref{th 1.3}.
Finally, in Section \ref{section 5} we will complete the proofs of
 Theorems \ref{th 1.4}, \ref{th 1.5} and \ref{th 1.6}.

Throughout the paper, we make use of the following notations.
$H^{1}(\mathbb{R}^{3})$ is the usual Sobolev space endowed with
the standard scalar product and norm. $L^{s}(\mathbb{R}^{3})$,
$1\leq s \leq \infty$, denotes the usual Lebesgue space
with the norm $|\cdot|_{s}$. For any $\rho>0$ and $z\in\mathbb{R}^{3}$,
$B_{\rho}(z)$ denotes the ball of radius $\rho$ centered at $z$.
 $|M|$ is the Lebesgue measure of the set $M$.
As usual, $X'$ denotes the dual space of $X$. Finally,  $C,\: C_{1},\:C_{2},\:\cdots$
denote different positive constants whose exact value is inessential.

\section{Preliminaries}\label{section 2}

\indent

In this section, we establish the variational framework for equation \eqref{eq 1.2}
as elaborated by \cite{Dingyanheng2007} and give some useful preliminary results.
Let
\begin{equation*}
   E=\left\{u\in H^{1}(\mathbb{R}^{3}) :\: \int_{\mathbb{R}^{3}}V(x)u^{2}\:dx<\infty\right\}
\end{equation*}
be equipped with the inner product and norm
\begin{equation*}
   \langle u,\:v \rangle=\int_{\mathbb{R}^{3}}\left(\nabla u \nabla v+V(x)uv\right)dx,
   \qquad \|u\|=\langle u, \,u \rangle^{1/2}.
\end{equation*}
For $\lambda > 0$, we also need the following inner product and norm
\begin{equation*}
   \langle u,\:v \rangle_{\lambda}=\int_{\mathbb{R}^{3}}\left( \nabla u \nabla v +
   \lambda V(x)uv\right)dx, \qquad \|u\|_{\lambda}=\langle u, \,u \rangle^{1/2}_{\lambda}.
\end{equation*}
It is clear that $\|u\| \leq \|u\|_{\lambda}$ for $\lambda \geq 1$.
Set $E_{\lambda}=\left(E,\, \|\cdot\|_{\lambda}\right)$. It then follows from the conditions
$(V_{1})$--$(V_{2})$ and the H\"{o}lder and the Sobolev inequalities that
\begin{align*}
  \int_{\mathbb{R}^{3}}\left(|\nabla u|^{2}+u^{2}\right)dx
    &  = \int_{\mathbb{R}^{3}}
    |\nabla u|^{2}\:dx + \int_{\mathcal{V}_b}u^{2}\:dx + \int_{\R^3 \setminus \mathcal{V}_b}u^{2}\:dx\\
  & \leq \int_{\mathbb{R}^{3}}|\nabla u| ^{2}\:dx+\left|\mathcal{V}_b\right|^{\frac{2}{3}}
    \left(\int_{\mathcal{V}_b}|u|^{6}\:dx \right)^{\frac{1}{3}}
      + b^{-1}\int_{\R^3 \setminus \mathcal{V}_b} V(x) u^{2}\:dx\\
  &\leq \max\left\{1+\left|\mathcal{V}_b\right|^{\frac{2}{3}}S^{-1},\: b^{-1}\right\}
    \int_{\mathbb{R}^{3}}\left( |\nabla u| ^{2}+V(x)u^{2}\right)dx,
\end{align*}
which implies that the embedding $E\hookrightarrow H^{1}(\mathbb{R}^{3})$ is continuous.
Thus, for each $s \in [2,6]$, there exists $d_{s}>0$
(independent of $\lambda \geq 1$) such that
\begin{equation}\label{eq 2.1}
   |u|_{s}\leq d_{s} \|u\| \leq d_{s} \|u\|_{\lambda} \qquad \text{for $u \in E$}.
\end{equation}
From the Hardy-Littlewood-Sobolev inequality \cite{Lieb-1983},
we deduce that
\begin{equation}\label{eq 2.2}
   \int_{\mathbb{R}^{3}}\phi_u u^2\:dx = \int_{\mathbb{R}^{3}}\int_{\mathbb{R}^{3}}
   \frac{u^2(x)u^2(y)}{4 \pi |x-y|}\:dxdy \leq C_{0}|u|^4_{12/5}  \quad
    \text{for}\ u \in L^{12/5}(\mathbb{R}^{3})
\end{equation}
with a constant $C_{0}>0$. Consequently, the functional $I_{\lambda,\mu}:\: E_\lambda \rightarrow \R$
given by
\begin{equation*}
  I_{\lambda,\mu} (u) = \frac{1}{2}\int_{\mathbb{R}^{3}} \left(|\nabla u|^{2}+\lambda
   V(x)u^{2}\right)dx+\frac{\mu}{4}\int_{\mathbb{R}^{3}} \phi_u u^2 \:dx
   - \frac{1}{p}\int_{\mathbb{R}^{3}}|u^+|^p\:dx
\end{equation*}
is well defined, and it is of class $C^{1}$ with derivative
\begin{equation*}
  \bigl\langle I'_{\lambda,\mu}(u), \:v \bigr\rangle = \int_{\mathbb{R}^{3}}\left(\nabla u \nabla v + \lambda V(x)uv\right)\:dx
   + \mu \int_{\mathbb{R}^{3}} \phi_u uv\:dx - \int_{\mathbb{R}^{3}}|u^+|^{p-2}u^+ v\:dx
\end{equation*}
for all $u,\, v \in E_\lambda$. Moreover, it is well known that every nontrivial critical point
of $I_{\lambda,\mu}$ is a positive solution of \eqref{eq 1.2},  but we repeat it
for the convenience of the reader in the following lemma.

\begin{lemma}\label{lem 2.1}
Suppose that $2< p < 4$ and $(V_{1})$--$(V_{2})$ are satisfied. Then every nontrivial critical
point of $I_{\lambda,\mu}$ is a positive solution of \eqref{eq 1.2}.
\end{lemma}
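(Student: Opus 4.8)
The plan is to argue in three stages: first show that any nontrivial critical point $u$ of $I_{\lambda,\mu}$ is nonnegative, then observe that nonnegativity collapses the truncated nonlinearity $|u^+|^{p-2}u^+$ to $|u|^{p-2}u$ so that $u$ solves \eqref{eq 1.2}, and finally upgrade $u\geq 0$ to $u>0$ via elliptic regularity and the strong maximum principle.

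For the first stage I would write $u=u^+-u^-$ with $u^\pm=\max\{\pm u,0\}\geq 0$, and test the identity $\langle I'_{\lambda,\mu}(u),v\rangle=0$ with the admissible choice $v=-u^-\in E_\lambda$ (note $u^-\in H^1(\R^3)$ and $\int_{\R^3}V(u^-)^2\,dx\leq\int_{\R^3}Vu^2\,dx<\infty$). Using $\nabla u^+\cdot\nabla u^-=0$ and $u^+u^-=0$ almost everywhere, all cross terms drop out and the superquadratic term vanishes because $|u^+|^{p-2}u^+u^-=0$; what survives is $\|u^-\|_\lambda^2+\mu\int_{\R^3}\phi_u(u^-)^2\,dx=0$. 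The decisive point is that $\phi_u=\Phi\ast u^2\geq 0$, so both summands are nonnegative; hence $\|u^-\|_\lambda=0$, i.e. $u^-\equiv 0$ and $u\geq 0$. This is exactly where the nonlocal term must be shown to cooperate, and its nonnegativity is what makes the sign argument go through.

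Once $u\geq 0$ we have $u^+=u$ and $|u^+|^{p-2}u^+=|u|^{p-2}u$, so the weak form of the critical point identity is precisely the weak formulation of \eqref{eq 1.2}; thus $u$ is a nonnegative weak solution. To pass to strict positivity I would first secure enough regularity: since $V$ is continuous, hence locally bounded by $(V_1)$, and $\phi_u$ is bounded and continuous (the Riesz potential of order $2$ of $u^2\in L^s(\R^3)$ for $s\in[1,3]$, taking any $s>3/2$), while the right-hand side $u^{p-1}$ is subcritical for $2<p<4$, a standard bootstrap gives $u\in C^{1,\alpha}_{\mathrm{loc}}(\R^3)$.

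Finally, I would rewrite \eqref{eq 1.2} as $-\Delta u+c(x)u=u^{p-1}\geq 0$ with $c:=\lambda V+\mu\phi_u\in L^\infty_{\mathrm{loc}}(\R^3)$ and $c\geq 0$. Since $u\geq 0$, $u\not\equiv 0$ and the right-hand side is nonnegative, the strong maximum principle (equivalently, the Harnack inequality for $-\Delta+c$) forces $u>0$ throughout $\R^3$, completing the proof. The only genuinely problem-specific obstacle is the nonnegativity of $\phi_u$ in the test-function computation, together with verifying that the potential and nonlocal coefficients are locally bounded so that the pointwise maximum principle applies; everything else is routine.
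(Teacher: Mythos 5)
Your proposal is correct and follows essentially the same route as the paper: test the critical point identity with the negative part $u^-$ (the nonnegativity of $\phi_u$ being exactly what kills the nonlocal term and yields $\|u^-\|_\lambda=0$), then conclude $u>0$ from the strong maximum principle. The only difference is that you spell out the regularity bootstrap and the local boundedness of $\lambda V+\mu\phi_u$ needed to invoke the maximum principle, details the paper's proof leaves implicit.
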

\begin{proof}
Let $u \in E_\lambda$ is a nontrivial critical point of $I_{\lambda,\mu}$, then
\begin{equation}\label{eq 2.3}
   \int_{\R^3} (\nabla u \nabla v +\lambda V(x)uv) \:dx +
   \mu \int_{\mathbb{R}^{3}}\phi_u u v\:dx
   - \int_{\R^3}|u^+|^{p-2}u^+ v\:dx = 0 \quad \text{for all $v \in E_\lambda$.}
\end{equation}
 Taking $v = u^-= -\min\{u,\:0\}$ in \eqref{eq 2.3}, we obtain
$\|u^-\|^2_\lambda = 0$, and so $u\geq 0$ in $\R^3$.
Hence, the strong maximum principle and the fact $u \neq 0$ imply that $u > 0$ in $\R^3$,
and the claim follows.
\end{proof}

We close this section with a useful theorem. It is a somewhat stronger version of the
Mountain Pass Theorem, which allows us to find so-called Cerami sequences
instead of Palais-Smale sequences.

\begin{theorem}[See \cite{Ekeland1990}]\label{th 2.2}
Let $X$ be a real Banach space with its dual space $X'$, and suppose that
$J \in C^1(X,\,\R)$ satisfies
$$
  \max\left\{J(0),\, J(e)\right\} \leq \xi <\eta \leq \inf_{\|u\|_X=\rho} J(u)
$$
for some $\xi <\eta$, $\rho>0$ and $e \in X$ with $\|e\|_X>\rho$. Let $c \geq \eta$
be characterized by
$$
  c = \inf_{\gamma \in \Gamma} \max_{t \in [0,1]} J(\gamma(t)),
$$
where $\Gamma = \left\{\gamma \in C\bigl([0,1],\, X\bigr):\: \gamma(0)=0,\:\gamma(1)=e\right\}$
is the set of continuous paths joining $0$ and $e$. Then there exists a sequence $\{u_n\}\subset X$
such that
$$
  J(u_n) \rightarrow c\geq \eta \qquad \text{and} \qquad
  (1+\|u_n\|_X)\|J'(u_{n})\|_{X'}\rightarrow 0
  \quad \text{as} \ n \rightarrow \infty.
$$
\end{theorem}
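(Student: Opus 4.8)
The plan is to argue by contradiction through a quantitative deformation built from a \emph{weighted} pseudo-gradient flow, the weight $(1+\|u\|_X)$ being exactly what converts the Cerami-type lower bound into a usable uniform descent while simultaneously preventing the flow from blowing up in finite time. As an alternative I note at the end that the whole scheme can be repackaged via Ekeland's variational principle, which is presumably the route taken in \cite{Ekeland1990}.

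First I would record the elementary facts about the min-max level. With $\Gamma$ and $c$ as in the statement, since $\|\gamma(0)\|_X = 0 < \rho < \|e\|_X = \|\gamma(1)\|_X$, the continuous map $t \mapsto \|\gamma(t)\|_X$ attains the value $\rho$ at some $t^\ast \in (0,1)$ for every $\gamma \in \Gamma$. Hence $\max_{t} J(\gamma(t)) \ge \inf_{\|u\|_X = \rho} J \ge \eta$, and therefore $c \ge \eta$; in particular $c$ is finite and strictly above $\xi \ge \max\{J(0), J(e)\}$, so the two endpoints $0$ and $e$ sit in a sublevel set well separated from the level $c$.

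Second, suppose no Cerami sequence exists at level $c$. Then there are constants $\bar\alpha, \bar\beta > 0$ with $(1+\|u\|_X)\|J'(u)\|_{X'} \ge \bar\alpha$ whenever $|J(u) - c| \le 2\bar\beta$, and after shrinking $\bar\beta$ I may also assume $\bar\beta < c - \xi$. On the open set $\{J' \ne 0\}$ I choose a locally Lipschitz pseudo-gradient field $\mathcal{W}$ with $\|\mathcal{W}(u)\|_X \le 2\|J'(u)\|_{X'}$ and $\langle J'(u), \mathcal{W}(u)\rangle \ge \|J'(u)\|_{X'}^2$, together with a locally Lipschitz cutoff $\chi$ equal to $1$ on $\{|J-c| \le \bar\beta\}$ and $0$ off $\{|J-c| \le 2\bar\beta\}$. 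Define the field $V(u) = -\chi(u)(1+\|u\|_X)\,\mathcal{W}(u)/\|\mathcal{W}(u)\|_X$, which is locally Lipschitz with $\|V(u)\|_X \le 1 + \|u\|_X$. The associated flow $\dot\sigma = V(\sigma)$, $\sigma(0,u) = u$, then obeys $\frac{d}{dt}\|\sigma\|_X \le 1 + \|\sigma\|_X$, so Gronwall forbids finite-time blow-up and $\sigma$ is globally defined; moreover $J$ is non-increasing along the flow, and where $\chi = 1$ one computes $\frac{d}{dt} J(\sigma) \le -\tfrac12 (1+\|\sigma\|_X)\|J'(\sigma)\|_{X'} \le -\bar\alpha/2$, a uniform descent rate inside the band $\{|J-c|\le\bar\beta\}$.

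Finally I would run the standard descent. Pick $\gamma_0 \in \Gamma$ with $\max_t J(\gamma_0(t)) \le c + \varepsilon$ for a small $\varepsilon \in (0, \bar\beta)$, and set $\gamma_T(t) = \sigma(T, \gamma_0(t))$ for a time $T$ chosen large enough that the rate $\bar\alpha/2$ carries every point whose value lies in the band down below the level $c$. Points of $\gamma_0$ already below $c-\bar\beta$ stay below $c$ because $J$ is non-increasing along $\sigma$, and the endpoints remain fixed since $\chi$ vanishes near $J(0), J(e) \le \xi < c - \bar\beta$, so $\gamma_T \in \Gamma$. This yields $\max_t J(\gamma_T(t)) < c$, contradicting the definition of $c$. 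I expect the only delicate bookkeeping to be the quantitative choice of $T$ together with checking that a point entering the band at height $\le c+\varepsilon$ stays in $\{|J-c|\le\bar\beta\}$ long enough to exit it downward (the guaranteed descent holds only while $\chi = 1$); this is routine since $\varepsilon<\bar\beta$. The conceptual heart, and the single place where the Cerami weight is essential rather than cosmetic, is the double role of the factor $(1+\|u\|_X)$ in the second step: it bounds the growth of $\|\sigma\|_X$ to rule out blow-up and, at the same time, turns $(1+\|u\|_X)\|J'(u)\|_{X'}\ge\bar\alpha$ into a descent rate independent of $\|u\|_X$.
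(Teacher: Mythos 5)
The paper never proves this statement: Theorem 2.2 is quoted as a known result (cited to Ekeland's monograph \cite{Ekeland1990}) and is then used as a black box to produce Cerami sequences for the truncated functional $I^T_{\lambda,\mu}$, so there is no in-paper argument to compare against. Your deformation proof is the standard self-contained route to the Cerami version of the Mountain Pass Theorem, and its core is sound: negating the existence of a Cerami sequence at level $c$ does yield constants $\bar\alpha,\bar\beta>0$ with $(1+\|u\|_X)\|J'(u)\|_{X'}\ge\bar\alpha$ on the band $\{|J-c|\le 2\bar\beta\}$; the truncated, normalized, weighted field $V(u)=-\chi(u)(1+\|u\|_X)\mathcal{W}(u)/\|\mathcal{W}(u)\|_X$ is locally Lipschitz with $\|V(u)\|_X\le 1+\|u\|_X$, so the flow is global; and where $\chi=1$ the same weight converts the lower bound into the uniform descent rate $\tfrac{d}{dt}J(\sigma)\le-\bar\alpha/2$. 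Since $J$ is non-increasing along the flow and the near-optimal path starts below $c+\bar\beta$, a trajectory can leave the band only downward through the level $c-\bar\beta$, so for $T$ large enough the deformed path lies strictly below $c$, contradicting the definition of $c$. Your identification of the double role of the weight $(1+\|u\|_X)$ (global existence of the flow, plus a descent rate independent of $\|u\|_X$) is exactly the conceptual heart of why Cerami sequences, rather than just Palais--Smale sequences, can be extracted. What the citation buys the paper is brevity; what your argument buys is self-containedness.

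One constant does need repair. For $\gamma_T\in\Gamma$ the endpoints must be genuinely fixed by the flow, i.e.\ you need $\chi(0)=\chi(e)=0$. Since the support of $\chi$ is contained in $\{|J-c|\le 2\bar\beta\}$ (not in the inner band), this requires $J(0),\,J(e)\le\xi<c-2\bar\beta$, i.e.\ $2\bar\beta<c-\xi$. Your stated normalization $\bar\beta<c-\xi$ only guarantees $\xi<c-\bar\beta$, which keeps $0$ and $e$ out of the inner band $\{|J-c|\le\bar\beta\}$ but not necessarily out of $\mathrm{supp}\,\chi$, so as written the endpoints could move and $\gamma_T$ could leave $\Gamma$. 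The fix is the same shrinking operation you already invoke: choose $\bar\beta$ with $2\bar\beta<c-\xi$, keep $\varepsilon<\bar\beta$, and the rest of your bookkeeping closes the argument.
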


\vspace{0.2cm}

\section{Existence and nonexistence of solutions to \eqref{eq 1.2}}\label{section 3}

\indent

In this section, we study the existence and nonexistence of solutions for \eqref{eq 1.2}
and give the proofs of Theorems \ref{th 1.1} and \ref{th 1.2}. For this,
we first define a cut-off function $\eta \in C^1
\left([0, \infty), \,\R\right)$ satisfying $0 \leq \eta \leq 1$,  $\eta(t)=1$ if $0 \leq t \leq 1$,
$\eta(t)=0$ if $t \geq 2$, $\max_{t>0} |\eta'(t)| \leq 2$ and $\eta'(t) \leq 0$ for each $t>0$.
Using $\eta$, for every $T>0$  we then consider the truncated functional $I^T_{\lambda,\mu}:\:
E_{\lambda} \rightarrow \mathbb{R}$ defined by
\begin{equation*}
  I^T_{\lambda,\mu} (u) = \frac{1}{2}\|u\|_\lambda^2+\frac{\mu}{4}\eta\left(\|u\|^2_{\lambda}/T^2\right)
  \int_{\mathbb{R}^{3}}\phi_u u^2\:dx - \frac{1}{p}|u^+|_p^p.
\end{equation*}
It is easy to see that $I^T_{\lambda,\mu}$ is of class $C^1$.
Moreover, for each $u,\, v \in E_\lambda$  we have
\begin{align}\label{eq 3.1}
  \bigl\langle (I^T_{\lambda,\mu})'(u),\:v\bigr\rangle&=\langle u,\:v\rangle_\lambda
   + \mu \eta\left(\|u\|^2_{\lambda}/T^2\right)\int_{\mathbb{R}^{3}}\phi_u uv\:dx\notag\\
  &\quad + \frac{\mu}{2 T^2}\eta'\left(\|u\|^2_{\lambda}/T^2\right)\langle u,\:v\rangle_\lambda
  \int_{\mathbb{R}^{3}}\phi_u u^2\:dx
  -\int_{\mathbb{R}^{3}}|u^+|^{p-2}u^+v\:dx.
\end{align}
With this penalization, by choosing an appropriate $T>0$ and restricting $\mu>0$ sufficiently small,
we may obtain a Cerami sequence $\{u_n\}$ of $I^T_{\lambda,\mu}$ satisfying $\|u_n\|_\lambda\leq T$, and so
$\{u_n\}$ is also a Cerami sequence $\{u_n\}$ of $I_{\lambda,\mu}$ satisfying $\|u_n\|_\lambda\leq T$.

To begin with, we show that the truncated functional $I^T_{\lambda,\mu}$ has
the mountain pass geometry.

\begin{lemma}\label{lem 3.1}
Suppose that $2< p < 4$ and $(V_{1})$--$(V_{2})$ hold. Then
for each $T,\:\mu>0$ and $\lambda \geq 1$, there exist $\alpha,\,\rho>0$
$($independent of $\,T,\,\lambda$ and $\mu$$)$ such that
$I^T_{\lambda,\mu}(u)\geq \alpha$ for all $u \in E_\lambda$
with $\|u\|_\lambda=\rho$.
\end{lemma}

\begin{proof}
For each $u \in E_\lambda$, by \eqref{eq 2.1} we have
$$
  I^T_{\lambda,\mu}(u) \geq \frac{1}{2}\|u\|^2_\lambda - \frac{1}{p} d_p^p\|u\|^p_\lambda
   = \|u\|^2_\lambda\left(\frac{1}{2} - \frac{1}{p}d_p^p \|u\|^{p-2}_\lambda\right) ,
$$
where the constant $d_p>0$ is independent of $T,\,\lambda$ and $\mu$.
Since $p>2$, the conclusion follows by choosing $\rho>0$ sufficiently small.
\end{proof}

\begin{lemma}\label{lem 3.2}
Suppose that $2<p<4$ and $(V_{1})$--$(V_{3})$ hold. Then there exists
$\mu^{\ast}>0$ such that for each $T,\, \lambda > 0$ and $\mu \in (0,\,\mu^{\ast})$,
we have $I^T_{\lambda,\mu}(e_0)<0$ for some $e_0 \in C_{0}^{\infty}(\Omega)$ with $|\nabla e_0|_2>\rho$.
\end{lemma}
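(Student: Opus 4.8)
The plan is to exploit the fact that every $u \in C_0^\infty(\Omega)$ is supported where $V \equiv 0$, so that $\|u\|_\lambda^2 = \int_{\R^3}|\nabla u|^2\,dx = |\nabla u|_2^2$ is \emph{independent of $\lambda$}. This single observation will simultaneously handle the uniformity in $\lambda$ and, together with $0 \leq \eta \leq 1$, the uniformity in $T$. First I would fix once and for all a function $w \in C_0^\infty(\Omega)$ with $w \geq 0$ and $w \not\equiv 0$; such a $w$ exists since $(V_3)$ guarantees that $\Omega$ is a nonempty open set. Then $|w^+|_p = |w|_p > 0$ and, by \eqref{eq 2.2}, $0 < \int_{\R^3}\phi_w w^2\,dx < \infty$.

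Next I would search for $e_0$ along the ray $\{t w : t > 0\}$. Because $\mathrm{supp}\,w \subset \Omega$, we have $\|tw\|_\lambda = t|\nabla w|_2$ for every $\lambda$, and the ``local part'' of the energy is
\[
 g(t) := \frac{t^2}{2}|\nabla w|_2^2 - \frac{t^p}{p}|w^+|_p^p.
\]
Since $p > 2$, $g(t) \to -\infty$ as $t \to \infty$. Hence I can pick $t_0 > 0$ so large that simultaneously $g(t_0) < 0$ and $t_0|\nabla w|_2 > \rho$, and set $e_0 := t_0 w \in C_0^\infty(\Omega)$. By construction $|\nabla e_0|_2 = t_0|\nabla w|_2 > \rho$, which is the required norm bound (here $\rho$ is the constant furnished by Lemma \ref{lem 3.1}).

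It then remains to absorb the nonlocal term by shrinking $\mu$. Using $0 \leq \eta \leq 1$ and the fact that $\|e_0\|_\lambda = |\nabla e_0|_2$ does not depend on $\lambda$, I would estimate
\[
 I^T_{\lambda,\mu}(e_0) \leq \frac{1}{2}|\nabla e_0|_2^2 - \frac{1}{p}|e_0^+|_p^p + \frac{\mu}{4}\int_{\R^3}\phi_{e_0}e_0^2\,dx = g(t_0) + \frac{\mu}{4}\int_{\R^3}\phi_{e_0}e_0^2\,dx,
\]
and this upper bound is independent of both $T$ and $\lambda$. Since $g(t_0) < 0$ and $\int_{\R^3}\phi_{e_0}e_0^2\,dx > 0$, setting
\[
 \mu^{\ast} := \frac{-4\,g(t_0)}{\int_{\R^3}\phi_{e_0}e_0^2\,dx} > 0
\]
gives $I^T_{\lambda,\mu}(e_0) < g(t_0) - g(t_0) = 0$ for every $\mu \in (0,\mu^{\ast})$, uniformly in $T, \lambda > 0$, as claimed.

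The point that deserves care — and the reason the statement requires $\mu$ small rather than arbitrary $\mu$ — is the competition between the $t^4$-scaling of the nonlocal term and the $t^p$-scaling of the nonlinearity when $2 < p < 4$: one cannot simply send $t \to \infty$ with the nonlocal term present, since it would eventually dominate. The remedy is to \emph{fix} $t_0$ first, making the nonlocal-free part strictly negative, and only afterwards choose $\mu$ small to control the (now frozen) nonlocal contribution, so that $\mu^{\ast}$ depends on $e_0$ alone and not on $T$ or $\lambda$. In this particular lemma the cut-off $\eta$ is used only through $\eta \leq 1$; its genuine role in truncating the functional will be felt later, in securing the boundedness of the Cerami sequence.
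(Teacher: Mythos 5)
Your proof is correct and follows essentially the same route as the paper: both fix a nonnegative test function supported in $\Omega$ (so that $\|\cdot\|_\lambda$ reduces to $|\nabla\cdot|_2$, independent of $\lambda$), scale it until the $\lambda V$-free part of the energy is negative, and then use $0\leq\eta\leq1$ together with the Hardy--Littlewood--Sobolev bound \eqref{eq 2.2} to absorb the nonlocal term by taking $\mu^{\ast}$ small, independently of $T$ and $\lambda$. The only cosmetic difference is that the paper normalizes $J_\lambda(e_0)\leq-1$ and bounds $\int_{\R^3}\phi_{e_0}e_0^2\,dx$ by $C_0|e_0|_{12/5}^4$, whereas you keep $g(t_0)<0$ and exhibit $\mu^{\ast}$ explicitly.
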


\begin{proof}
We first define the functional $J_{\lambda}:\: E_\lambda \rightarrow \mathbb{R}$ by
\begin{equation*}
  J_{\lambda} (u) = \frac{1}{2}\int_{\mathbb{R}^{3}} \left(|\nabla u|^{2}+\lambda
   V(x) u^{2}\right)dx - \frac{1}{p}\int_{\mathbb{R}^{3}}|u^+|^p\:dx.
\end{equation*}
Let $e \in C_{0}^{\infty}(\Omega)$ be a positive smooth function, then we have
$$
  J_{\lambda} (t e) = \frac{t^2}{2}\int_{\Omega} |\nabla e|^{2}\:dx
     - \frac{t^p}{p}\int_{\Omega}|e|^p\:dx \rightarrow -\infty
     \qquad \text{as $t\rightarrow \infty$}.
$$
Therefore, there exists $e_0 \in C_{0}^{\infty}(\Omega)$ with $|\nabla e_0|_2>\rho$
such that $J_{\lambda}(e_0) \leq -1$. Since
\begin{equation*}
  I^T_{\lambda,\mu}(e_0)= J_{\lambda}(e_0)+\frac{\mu}{4}\eta\left(\|e_0\|^2_{\lambda}/T^2\right)
      \int_{\mathbb{R}^{3}}\phi_{e_0}e_0^2\:dx
  \leq  -1 + \frac{\mu}{4}C_0|e_0|^4_{12/5},
\end{equation*}
there exists $\mu^\ast>0$ $($independent of $\lambda$ and $T$$)$ such that
$I^T_{\lambda,\mu}(e_0)<0$ for all $T,\,\lambda>0$ and $\mu \in (0,\,\mu^\ast)$.
The proof is thus finished.
\end{proof}

\begin{remark}\label{remark 3.1}
\rm
We would like to point out that, the function $e_0 \in C_{0}^{\infty}(\Omega)$
of Lemma \ref{lem 3.2} is positive, and it does not depend on $\,T,\,\lambda$ and $\mu$.
\end{remark}

We now consider the mountain pass value
\vspace{-0.1cm}
\begin{equation*}
  c^T_{\lambda,\mu}=\inf_{\gamma \in \Gamma}\max_{t \in [0,1]}
   I^T_{\lambda,\mu}(\gamma(t)),
\end{equation*}
where
$
  \Gamma=\left\{\gamma \in C\big([0,1],\, E_\lambda)\big):\:\gamma(0)=0,\:\gamma(1)=e_0\right\}.
$
From Lemmas \ref{lem 3.1}, \ref{lem 3.2} and Theorem \ref{th 2.2}, we thus deduce that
for each $T>0$, $\lambda\geq 1$ and $\mu \in (0,\,\mu^\ast)$, there exists a Cerami sequence
$\{u_{n}\}\subset E_\lambda$ (here we do not write the dependence on $T,\,\lambda$ and $\mu$)
such that
\begin{equation}\label{eq 3.2}
  I^T_{\lambda,\mu}(u_{n})\rightarrow c^T_{\lambda,\mu} \qquad \text{and} \qquad
  (1+\|u_n\|_\lambda)\left\|(I^T_{\lambda,\mu})'(u_{n})\right\|_{E'_\lambda}\rightarrow 0.
\end{equation}
Clearly, $c^T_{\lambda,\mu}\geq \alpha>0$. Next, we also provide an estimate on the upper bound
of $c^T_{\lambda,\mu}$ which is the key ingredient of the truncation technique.

\begin{lemma}\label{lem 3.3}
Suppose that $2<p<4$ and $(V_{1})$--$(V_{3})$ hold. Then for each
$T>0$, $\lambda \geq 1$ and $\mu \in (0,\,\mu^\ast)$, there exists $M>0$
$($independent of $\,T,\,\lambda$ and $\mu$$)$ such that $c^T_{\lambda,\mu}\leq M$.
\end{lemma}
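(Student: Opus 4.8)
The plan is to bound the mountain pass value from above by evaluating $I^T_{\lambda,\mu}$ along the single straight-line path joining $0$ to the fixed function $e_0$ produced in Lemma \ref{lem 3.2}, and then to check that every term in the resulting estimate is controllable independently of $T$, $\lambda$ and $\mu$. Since $\gamma_0(t)=t\,e_0$ is an admissible path in $\Gamma$ (it is continuous in $E_\lambda$, with $\gamma_0(0)=0$ and $\gamma_0(1)=e_0$), we immediately obtain
\[
  c^T_{\lambda,\mu}\leq \max_{t\in[0,1]} I^T_{\lambda,\mu}(t\,e_0),
\]
so the whole lemma reduces to producing a $T,\lambda,\mu$-independent bound for the right-hand side.

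The crucial observation, already built into the choice of $e_0$ in Lemma \ref{lem 3.2} and highlighted in Remark \ref{remark 3.1}, is that $e_0\in C_0^\infty(\Omega)$ with $\Omega=\mathrm{int}\,V^{-1}(0)$, so $V\equiv 0$ on the support of $e_0$. Hence the potential term drops out and $\|t\,e_0\|_\lambda^2=t^2\int_\Omega|\nabla e_0|^2\,dx$ is \emph{independent of $\lambda$}; this is what ultimately makes the estimate uniform. Since $e_0$ is positive, $|(t\,e_0)^+|_p^p=t^p\int_\Omega|e_0|^p\,dx$, and the local part of $I^T_{\lambda,\mu}(t\,e_0)$ equals $g(t):=\frac{t^2}{2}\int_\Omega|\nabla e_0|^2\,dx-\frac{t^p}{p}\int_\Omega|e_0|^p\,dx$, a fixed real function of $t$ depending only on $e_0$. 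Because $p>2$, $g$ attains a finite maximum $M_0:=\max_{t\geq 0}g(t)$ that depends only on $e_0$.

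Next I would control the nonlocal term. Using $0\leq\eta\leq 1$, the nonnegativity of $\int_{\R^3}\phi_{t e_0}(t e_0)^2\,dx$, the scaling $\int_{\R^3}\phi_{t e_0}(t e_0)^2\,dx=t^4\int_{\R^3}\phi_{e_0}e_0^2\,dx$, estimate \eqref{eq 2.2}, and the restrictions $t\in[0,1]$, $\mu<\mu^\ast$, one gets
\[
  \frac{\mu}{4}\,\eta\bigl(\|t e_0\|_\lambda^2/T^2\bigr)\int_{\R^3}\phi_{t e_0}(t e_0)^2\,dx
  \leq \frac{\mu^\ast}{4}C_0\,|e_0|_{12/5}^4.
\]
Combining the two bounds yields $c^T_{\lambda,\mu}\leq M_0+\frac{\mu^\ast}{4}C_0\,|e_0|_{12/5}^4=:M$, and since $e_0$, $\mu^\ast$ and $C_0$ are all fixed and independent of $T$, $\lambda$ and $\mu$, so is $M$.

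There is no serious analytic obstacle here; the lemma is easy once the correct comparison path is used. The only thing to watch is the bookkeeping: one must verify that the \emph{same} constant $M$ works simultaneously for all three parameters. The two potential sources of $\lambda$-dependence are the norm $\|t e_0\|_\lambda$ and the cut-off argument $\|t e_0\|_\lambda^2/T^2$; both are neutralized because $\|t e_0\|_\lambda$ is $\lambda$-free (by the support condition on $e_0$) and because $\eta$ enters the final estimate only through the crude bound $\eta\leq 1$, so the precise value of the cut-off argument, and hence of $T$, never appears. The $\mu$-dependence is likewise removed by the uniform restriction $\mu<\mu^\ast$.
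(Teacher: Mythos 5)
Your proposal is correct and follows essentially the same route as the paper: both evaluate $I^T_{\lambda,\mu}$ along the segment $t\mapsto t\,e_0$, exploit that $e_0\in C_0^\infty(\Omega)$ makes $\|t e_0\|_\lambda$ independent of $\lambda$, and discard the cut-off via $\eta\leq 1$ together with $\mu<\mu^\ast$ and the bound \eqref{eq 2.2} to get a parameter-free maximum. Your write-up merely spells out the bookkeeping that the paper compresses into ``By Remark \ref{remark 3.1}, it is easy to see that\dots''.
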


\begin{proof}
By Remark \ref{remark 3.1}, it is easy to see that
\begin{equation*}
  I^T_{\lambda,\mu}(te_{0}) \leq \frac{t^2}{2}|\nabla e_0|_2^{2}
    +\frac{\mu^\ast}{4}C_0 t^4|e_0|^4_{12/5}
    - \frac{t^p}{p}|e_0|_p^p.
\end{equation*}
Consequently, there exists a constant $M>0$ $($independent of $T,\:\lambda$ and $\mu$$)$
such that
$$
  c^T_{\lambda,\mu}\leq \max_{t \in [0,1]}I^T_{\lambda,\mu}(te_{0})\leq M.
$$
This completes the proof.
\end{proof}

In the following key lemma, we shall show that for a given $T>0$ properly,
after passing to a subsequence,
the sequence $\{u_{n}\}$ given by \eqref{eq 3.2} satisfies $\|u_n\|_\lambda\leq T$,
and so $\{u_n\}$ is also a bounded Cerami sequence of $I_{\lambda,\mu}$
satisfying $\|u_n\|_\lambda\leq T$.

\begin{lemma}\label{lem 3.4}
Suppose that $2< p < 4$ and $(V_{1})$--$(V_{3})$ hold, and let $T=\sqrt{\frac{2p(M+1)}{p-2}}$.
Then there exists $\mu_\ast \in (0, \,\mu^\ast)$ such that,
for each $\lambda \geq 1$ and $\mu \in (0,\,\mu_{\ast})$,
if $\{u_{n}\}\subset E_\lambda$ is a sequence satisfying \eqref{eq 3.2},
then we have, up to a subsequence, $\|u_{n}\|_\lambda \leq T$.
In particular, this sequence $\{u_{n}\}$ is also a Cerami sequence
at level $c^T_{\lambda,\mu}$ for $I_{\lambda,\mu}$, i.e.,
\begin{equation*}
  I_{\lambda,\mu}(u_{n})\rightarrow c^T_{\lambda,\mu} \qquad \text{and} \qquad
  (1+\|u_n\|_\lambda)\left\|I'_{\lambda,\mu}(u_{n})\right\|_{E'_\lambda}\rightarrow 0,
\end{equation*}
\end{lemma}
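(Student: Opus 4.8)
The plan is to run the standard truncation argument around the energy identity that mixes $I^T_{\lambda,\mu}$ with its radial derivative. Writing $B(u)=\int_{\R^3}\phi_u u^2\,dx\ge 0$ and $s_n=\|u_n\|^2_\lambda/T^2$, a direct computation from \eqref{eq 3.1} gives, for every $n$,
\[
  I^T_{\lambda,\mu}(u_n)-\frac1p\bigl\langle (I^T_{\lambda,\mu})'(u_n),\,u_n\bigr\rangle
  =\frac{p-2}{2p}\|u_n\|^2_\lambda
  +\mu B(u_n)\Bigl[\Bigl(\tfrac14-\tfrac1p\Bigr)\eta(s_n)-\tfrac{1}{2p}\,\eta'(s_n)s_n\Bigr].
\]
By \eqref{eq 3.2} the quantity $\langle (I^T_{\lambda,\mu})'(u_n),u_n\rangle\to 0$ and $I^T_{\lambda,\mu}(u_n)\to c^T_{\lambda,\mu}$, so the left-hand side tends to $c^T_{\lambda,\mu}\le M$, where $M$ is the bound from Lemma \ref{lem 3.3}. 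The whole game is then to show that the nonlocal bracket, which is the only term of indefinite sign, can be controlled by a factor proportional to $\mu$.

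First I would confine the sequence to the ball of radius $\sqrt2\,T$. If $\|u_n\|_\lambda\ge\sqrt2\,T$, then $s_n\ge 2$, whence $\eta(s_n)=\eta'(s_n)=0$ by the construction of the cut-off; the bracket vanishes and the identity collapses to $\frac{p-2}{2p}\|u_n\|^2_\lambda\ge\frac{p-2}{2p}\cdot 2T^2=2(M+1)$, using $T^2=\tfrac{2p(M+1)}{p-2}$. Since the left-hand side is below $M+1$ for $n$ large, this cannot happen, so $\|u_n\|_\lambda<\sqrt2\,T$ eventually.

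The decisive step is to upgrade this bound from $\sqrt2\,T$ to $T$ by shrinking $\mu$. On the region $\|u_n\|_\lambda<\sqrt2\,T$ the factor $B(u_n)$ is a priori bounded: by \eqref{eq 2.2} and \eqref{eq 2.1}, $B(u_n)\le C_0|u_n|^4_{12/5}\le C_0 d_{12/5}^4(\sqrt2\,T)^4$, while the bracket is bounded in absolute value by a constant depending only on $p$ (using $0\le\eta\le1$, $|\eta'|\le 2$ and $s_n\le 2$). Hence $\mu B(u_n)|\,\cdots\,|\le\mu C_1$ with $C_1>0$ independent of $\lambda,\mu,n$, and the identity yields
\[
  \frac{p-2}{2p}\|u_n\|^2_\lambda\le c^T_{\lambda,\mu}+\mu C_1+o(1)\le M+\mu C_1+o(1),
\]
that is, $\|u_n\|^2_\lambda\le T^2\,\frac{M+\mu C_1}{M+1}+o(1)$. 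Choosing $\mu_\ast\in(0,\mu^\ast)$ so small that $\mu_\ast C_1\le\frac12$ forces the fraction below $1$ with a fixed gap; the $o(1)$ is then absorbed and $\|u_n\|_\lambda\le T$ for all large $n$ (equivalently, up to a subsequence).

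Once $\|u_n\|_\lambda\le T$ we have $s_n\le 1$, so $\eta(s_n)=1$ and $\eta'(s_n)=0$; substituting into the definition of $I^T_{\lambda,\mu}$ and into \eqref{eq 3.1} shows $I^T_{\lambda,\mu}(u_n)=I_{\lambda,\mu}(u_n)$ and $(I^T_{\lambda,\mu})'(u_n)=I'_{\lambda,\mu}(u_n)$, which together with \eqref{eq 3.2} gives precisely the asserted Cerami property for $I_{\lambda,\mu}$ at level $c^T_{\lambda,\mu}$. I expect the transition regime $T\le\|u_n\|_\lambda\le\sqrt2\,T$, where $\eta'$ is active, to be the only delicate point: there the nonlocal contribution has no definite sign in the identity, but because $B(u_n)$ is controlled by $T$ alone its entire contribution carries the prefactor $\mu$ and is defeated by the choice of $\mu_\ast$; it is important that both $C_1$ and $\mu_\ast$ stay independent of $\lambda$, which is what keeps the final selection of $\mu_\ast$ uniform.
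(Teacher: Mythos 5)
Your proof is correct and follows essentially the same truncation argument as the paper: the same identity for $I^T_{\lambda,\mu}(u_n)-\tfrac{1}{p}\bigl\langle (I^T_{\lambda,\mu})'(u_n),u_n\bigr\rangle$, the same two-stage bound (first $\sqrt{2}\,T$ using $T^2=\tfrac{2p(M+1)}{p-2}$, then $T$ by shrinking $\mu$), and the same appeal to Lemma \ref{lem 3.3}. The only cosmetic difference is in the intermediate regime $T\le\|u_n\|_\lambda\le\sqrt{2}\,T$: you bound the whole cut-off bracket in absolute value via $|\eta'|\le 2$ and conclude directly, whereas the paper uses the sign condition $\eta'\le 0$ to discard the $\eta'$-term and argues by contradiction; both yield a choice of $\mu_\ast$ uniform in $\lambda\ge 1$.
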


\begin{proof}
We first show that $\|u_n\|_\lambda \leq \sqrt{2}T$ for $n$ large enough.
Suppose by contradiction that, there exists a subsequence of $\{u_{n}\}$,
still denoted by $\{u_{n}\}$, such that $\|u_n\|_\lambda > \sqrt{2}T$.
By \eqref{eq 3.1} and \eqref{eq 3.2}, we then obtain
\begin{align}\label{eq 3.3}
  c^T_{\lambda,\mu} &= \lim_{n\rightarrow\infty}\left(I^T_{\lambda,\mu}(u_{n})
     - \frac{1}{p} \left\langle (I^T_{\lambda,\mu})'(u_{n}),\:  u_{n}\right\rangle\right)\notag\\
  &= \lim_{n\rightarrow\infty}\Bigl(\bigl(\frac{1}{2}-\frac{1}{p}\bigr)\|u_n\|_\lambda^2 -
     \bigl(\frac{\mu}{p}-\frac{\mu}{4}\bigr)\eta\bigl(\|u_n\|^2_{\lambda}/T^2\bigr)
     \int_{\mathbb{R}^{3}}\phi_{u_n}u_n^2\:dx\notag\\
  &\ \quad  -\frac{\mu}{2p T^2}\eta'\left(\|u_n\|^2_{\lambda}/T^2\right)
      \|u_n\|_\lambda^2 \int_{\mathbb{R}^{3}}\phi_{u_n}u_n^2\:dx\Bigr)\notag\\
  &\geq 2(M+1),
\end{align}
which is a contradiction by Lemma \ref{lem 3.3}.

We may now complete the proof of the lemma. Suppose by contradiction that,
there exists no subsequence of $\{u_{n}\}$ which is uniformly bounded by $T$.
Then we deduce that $T< \|u_n\|_\lambda \leq \sqrt{2}T$ for $n$ large enough.
With a similar computation as \eqref{eq 3.3} and using the fact that $\eta$ is nonincreasing,
we conclude that
\begin{align*}
  c^T_{\lambda,\mu} &= \lim_{n\rightarrow\infty}\left(I^T_{\lambda,\mu}(u_{n})
     - \frac{1}{p} \left\langle (I^T_{\lambda,\mu})'(u_{n}),\:  u_{n}\right\rangle\right)\\
  &= \lim_{n\rightarrow\infty}\Bigl(\bigl(\frac{1}{2}-\frac{1}{p}\bigr)\|u_n\|_\lambda^2 -
     \bigl(\frac{\mu}{p}-\frac{\mu}{4}\bigr)\eta\bigl(\|u_n\|^2_{\lambda}/T^2\bigr)
     \int_{\mathbb{R}^{3}}\phi_{u_n}u_n^2\:dx\\
  &\ \quad  -\frac{\mu}{2p T^2}\eta'\left(\|u_n\|^2_{\lambda}/T^2\right)
      \|u_n\|_\lambda^2 \int_{\mathbb{R}^{3}}\phi_{u_n}u_n^2\:dx\Bigr)\\
  &\geq \liminf_{n\rightarrow\infty}\Bigl(\bigl(\frac{1}{2}-\frac{1}{p}\bigr)\|u_n\|_\lambda^2 -
     \bigl(\frac{\mu}{p}-\frac{\mu}{4}\bigr)C_0d_{12/5}^4\| u_n\|_\lambda^4 \Big)\\
  &\geq (M+1)-\frac{4p(4-p)}{(p-2)^2}\mu C_0d_{12/5}^4(M+1)^2,
\end{align*}
this is a contradiction by choosing $\mu_\ast>0$ sufficiently small. So the claim follows.
\end{proof}

We are now ready to give the compactness conditions for $I_{\lambda,\mu}$.
For this we need to establish the following parameter-dependent compactness lemma.

\begin{lemma}\label{lem 3.5}
Suppose that $2< p < 4$ and $(V_{1})$--$(V_{3})$ hold, and let $T=\sqrt{\frac{2p(M+1)}{p-2}}$.
Then there exists $\lambda^\ast > 1$ such that,
for each $\lambda \in (\lambda^\ast,\,\infty)$ and $\mu \in (0,\,\mu_{\ast})$,
if $\{u_{n}\}\subset E_\lambda$ is a sequence satisfying \eqref{eq 3.2},
then $\{u_{n}\}$ has a convergent subsequence in $E_\lambda$.
\end{lemma}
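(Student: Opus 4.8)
The plan is to prove the parameter-dependent compactness lemma, which is the standard but delicate ingredient that lets the steep potential well restore compactness lost on the unbounded domain $\R^3$. Since $\{u_n\}$ satisfies \eqref{eq 3.2}, Lemma \ref{lem 3.4} already gives (after passing to a subsequence) the uniform bound $\|u_n\|_\lambda \leq T$ for all $n$, where $T$ is independent of $\lambda$ and $\mu$. Thus $\{u_n\}$ is bounded in $E_\lambda$, hence bounded in $H^1(\R^3)$ by \eqref{eq 2.1}, so up to a subsequence $u_n \rightharpoonup u$ weakly in $E_\lambda$ and in $H^1(\R^3)$, $u_n \to u$ strongly in $L^s_{\text{loc}}(\R^3)$ for $s \in [1,6)$, and $u_n \to u$ a.e.\ in $\R^3$. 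The goal is to upgrade this to strong convergence $u_n \to u$ in $E_\lambda$ by showing $\|u_n - u\|_\lambda \to 0$, and the threshold $\lambda^\ast$ will be chosen precisely to make the tail estimate work.

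First I would establish the crucial \emph{global} integrability control away from the well. The key observation is that for $u \in E_\lambda$ with $\lambda \geq 1$, splitting the integral over $\mathcal{V}_b$ and its complement and using that $V(x) \geq b$ on $\R^3 \setminus \mathcal{V}_b$ together with $(V_2)$, one obtains for the tail
\begin{equation*}
  \int_{\R^3 \setminus \mathcal{V}_b} u^2\,dx \leq \frac{1}{\lambda b}\int_{\R^3 \setminus \mathcal{V}_b}\lambda V(x) u^2\,dx \leq \frac{1}{\lambda b}\|u\|_\lambda^2,
\end{equation*}
so the $L^2$-mass outside $\mathcal{V}_b$ is $O(1/\lambda)$ uniformly. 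Combining this with the Sobolev inequality on $\mathcal{V}_b$ (which has finite measure) and interpolation, one shows that for each fixed $s \in (2,6)$ there is a constant $C$ and an exponent so that the portion of $|u_n - u|_s^s$ coming from $\R^3 \setminus \mathcal{V}_b$ is controlled by $C \lambda^{-\sigma}$ for some $\sigma > 0$, uniformly in $n$; meanwhile the portion over the bounded set $\mathcal{V}_b$ vanishes by the local strong convergence. This is the standard Bartsch--Wang mechanism: the steep well forces concentration and defeats the loss of compactness at infinity.

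Next I would test the difference $(I^T_{\lambda,\mu})'(u_n) - (I^T_{\lambda,\mu})'(u)$ against $u_n - u$. From \eqref{eq 3.2} we have $(I^T_{\lambda,\mu})'(u_n) \to 0$ in $E'_\lambda$, and $\langle (I^T_{\lambda,\mu})'(u), u_n - u\rangle \to 0$ by weak convergence, so $\langle (I^T_{\lambda,\mu})'(u_n) - (I^T_{\lambda,\mu})'(u), u_n - u\rangle \to 0$. Using the explicit form \eqref{eq 3.1} of the derivative, this bracket equals $\|u_n - u\|_\lambda^2$ plus the nonlocal terms (involving $\phi_{u_n}$, the cut-off $\eta$ and $\eta'$) plus the nonlinear term $\int (|u_n^+|^{p-2}u_n^+ - |u^+|^{p-2}u^+)(u_n - u)$. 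The nonlinear term tends to $0$ by the uniform $L^s$-tail estimate above combined with local strong convergence and the boundedness of $\{u_n\}$; the nonlocal terms are handled via \eqref{eq 2.2}, the uniform bound $\|u_n\|_\lambda \leq \sqrt{2}\,T$, the boundedness of $\eta, \eta'$, and the weak continuity of the map $u \mapsto \int \phi_u u v$ on the bounded sequence. Isolating the leading term then yields $\|u_n - u\|_\lambda^2 \to 0$, which is the desired strong convergence.

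The main obstacle I anticipate is twofold. The first and most essential difficulty is controlling the nonlinear term $\int_{\R^3}(|u_n^+|^{p-2}u_n^+ - |u^+|^{p-2}u^+)(u_n - u)\,dx$ \emph{uniformly on all of $\R^3$}, not just on the bounded set $\mathcal{V}_b$ where local compactness applies: this is exactly where the $O(1/\lambda)$ tail bound must be invoked, and it is what forces the restriction to $\lambda$ larger than some threshold $\lambda^\ast$. The second difficulty, specific to this nonlocal problem and absent from the pure Schr\"odinger case of \cite{Bartsch2001}, is keeping the cut-off terms containing $\eta'(\|u_n\|_\lambda^2/T^2)\langle u_n, u_n - u\rangle_\lambda$ from interfering with the sign of the leading $\|u_n - u\|_\lambda^2$ term; here one exploits that $\eta$ is nonincreasing (so $\eta' \leq 0$) together with the factor $\mu < \mu_\ast$ being small, ensuring these extra terms are either favorably signed or negligible and do not spoil the coercive estimate.
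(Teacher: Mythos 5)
Your proposal follows the same skeleton as the paper's proof: invoke Lemma \ref{lem 3.4} to get $\|u_n\|_\lambda\le T$ along a subsequence, extract a weak limit $u$, prove the tail estimate $|v_n|_2^2\le\frac{1}{\lambda b}\|v_n\|_\lambda^2+o(1)$ for $v_n=u_n-u$, interpolate to control $|v_n|_p$, and test derivatives. However, there is a genuine gap at the decisive step: your claim that the nonlinear term $\int_{\R^3}\bigl(|u_n^+|^{p-2}u_n^+-|u^+|^{p-2}u^+\bigr)(u_n-u)\,dx$ \emph{tends to zero} is false for a fixed $\lambda$. The embedding $E_\lambda\hookrightarrow L^p(\R^3)$ is in general not compact for fixed $\lambda$ (e.g.\ when $V$ is bounded), and your own tail estimate only yields $|v_n|_p\le d_0(\lambda b)^{-\theta/2}\|v_n\|_\lambda+o(1)$ with $\theta=\frac{6-p}{2p}$, i.e.\ $\limsup_n|v_n|_p\le C(\lambda b)^{-\theta/2}$: this is \emph{small for large $\lambda$} but need not vanish as $n\to\infty$. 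Note the internal inconsistency this creates: if the nonlinear term really were $o(1)$, your argument would prove the lemma for every $\lambda\ge1$ and no threshold $\lambda^\ast$ would ever be needed, contradicting your own (correct) remark that the tail bound forces $\lambda$ large. The missing idea is \emph{absorption}, which is exactly how the paper closes the argument: one bounds the nonlinear contribution quadratically, $|v_n^+|_p^p\le|v_n|_p^{p-2}|v_n|_p^2\le(2d_pT)^{p-2}d_0^2(\lambda b)^{-\theta}\|v_n\|_\lambda^2+o(1)$ (in your difference formulation, use $\bigl||a|^{p-2}a-|b|^{p-2}b\bigr|\le C(|a|+|b|)^{p-2}|a-b|$ and H\"older to reach the same kind of bound), so that the identity $o(1)=\langle I'_{\lambda,\mu}(u_n),u_n\rangle-\langle I'_{\lambda,\mu}(u),u\rangle$ (after Brezis--Lieb splitting and Fatou for the nonlocal term) gives $\bigl[1-(2d_pT)^{p-2}d_0^2(\lambda b)^{-\theta}\bigr]\|v_n\|_\lambda^2\le o(1)$. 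Choosing $\lambda^\ast$ so that the bracket is positive for $\lambda>\lambda^\ast$ is precisely where the threshold enters; this inequality is absent from your proposal.

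Two secondary inaccuracies. First, your anticipated ``second difficulty'' concerning the cut-off terms is vacuous: Lemma \ref{lem 3.4} concludes that $\{u_n\}$ is a Cerami sequence for the \emph{untruncated} functional $I_{\lambda,\mu}$. Indeed, since $\eta\equiv1$ on $[0,1]$, once $\|u_n\|_\lambda\le T$ (and $\|u\|_\lambda\le T$ by weak lower semicontinuity) we have $\eta(\|u_n\|_\lambda^2/T^2)=1$ and $\eta'(\|u_n\|_\lambda^2/T^2)=0$, so every term involving $\eta$ or $\eta'$ vanishes identically; no sign considerations or smallness of $\mu$ are required, and the paper accordingly works with $I_{\lambda,\mu}$ directly. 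Second, under $(V_2)$ the set $\mathcal{V}_b$ has finite measure but need not be bounded, so ``local strong convergence on the bounded set $\mathcal{V}_b$'' is not quite right; the conclusion $\int_{\mathcal{V}_b}v_n^2\,dx\to0$ still holds, but via the finite-measure argument (split off $\mathcal{V}_b\cap B_R(0)$, use local compactness there, and control $\mathcal{V}_b\setminus B_R(0)$ by H\"older with $|\mathcal{V}_b\setminus B_R(0)|\to0$ and the bounded $L^6$ norms).
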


\begin{proof}
By Lemma \ref{lem 3.4}, we see that, up to a subsequence, $\|u_{n}\|_\lambda \leq T$.
Passing to a subsequence again if necessary, we may assume that there exists
$u \in E_\lambda$ such that
\begin{equation}\label{eq 3.4}
  u_n \rightharpoonup u \quad \text{in $E_{\lambda}$}\qquad \text{and}\qquad
  \lim_{n\rightarrow \infty } \int_{\mathbb{R}^{3}}\phi_{u_n}u_n^2\:dx \geq
  \int_{\mathbb{R}^{3}}\phi_{u}u^2\:dx
\end{equation}
Moreover, $u$ is a critical point of $I_{\lambda,\mu}$, and it follows that
\begin{equation}\label{eq 3.5}
   \langle I'_{\lambda,\mu}(u),\:u\rangle=\|u\|_\lambda^2  + \mu \int_{\mathbb{R}^{3}}\phi_{u}u^2\:dx
   - |u^+|_p^{p} = 0.
\end{equation}

Now we show that $u_n \rightarrow u$ in $E_\lambda$. Let $v_n:=u_n-u$.
It follows from $(V_2)$ that
\begin{equation*}
   |v_n|_2^2= \int_{\R^3 \setminus \mathcal{V}_b} v_n^2\:dx
    + \int_{\mathcal{V}_b} v_n^2 \:dx\leq \frac{1}{\lambda b}
    \|v_n\|^2_\lambda +o(1).
\end{equation*}
Then, by the H\"{o}lder and Sobolev inequalities, we have
\begin{equation*}
   |v_n|_p \leq |v_n|_2^\theta |v_n|_6^{1-\theta}
   \leq d_0|v_n|_2^\theta |\nabla v_n|_2^{1-\theta}
   \leq d_0(\lambda b)^{-\theta/2}\|v_n\|_\lambda +o(1),
\end{equation*}
where $\theta= \frac{6-p}{2p}$ and the constant $d_0>0$ is independent
of $b$ and $\lambda$.
Combining this with \eqref{eq 2.1}, \eqref{eq 3.4} and \eqref{eq 3.5}, we infer that
\begin{align*}
  o(1) &=  \bigl\langle I'_{\lambda,\mu}(u_n), \: u_n \bigr\rangle - \bigl\langle I'_{\lambda,\mu}(u), \: u \bigr\rangle \\
    & =  \|u_n\|_\lambda^2 + \mu \int_{\mathbb{R}^{3}}\phi_{u_n}u_n^2\:dx - |u_n^+|_p^{p}  - \|u\|_\lambda^2
           - \mu \int_{\mathbb{R}^{3}}\phi_{u}u^2\:dx + |u^+|_p^{p}\\
    & \geq \|v_n\|_\lambda^2 - |v_n^+|_p^{p} + o(1) \\
    & \geq  \|v_n\|_\lambda^2 - |v_n|_p^{p-2}|v_n|_p^{2}+o(1)\\
    & \geq \left[1- \left(2d_p T \right)^{p-2}d_0^2
     (\lambda b)^{-\theta} \right]\|v_n\|_\lambda^2+o(1).
\end{align*}
Hence, there exists $\lambda^\ast > 1$ such that $v_n \rightarrow 0$
in $E_\lambda$ for all $\lambda > \lambda^\ast$.
This completes the proof.
\end{proof}

\vskip 0.2 true cm

\begin{proof}[Proof of Theorem \ref{th 1.1}]
Let $T$ be defined as in Lemma \ref{lem 3.4}.
By Lemmas \ref{lem 3.1} and \ref{lem 3.2}, there exists $\mu^\ast>0$ such that
for every $\lambda\geq 1$ and $\mu \in (0,\,\mu^\ast)$, $I^T_{\lambda,\mu}$
possesses a Cerami sequence $\{u_{n}\}$ at the mountain pass level $c^T_{\lambda,\mu}$.
From Lemmas \ref{lem 3.3} and \ref{lem 3.4}, we thus deduce that there exists
$\mu_\ast \in (0,\,\mu^\ast)$  such that for every $\lambda \geq 1$ and
$\mu \in (0,\:\mu_\ast)$, after passing to a subsequence, $\{u_{n}\}$ is
a Cerami sequence of  $I_{\lambda,\mu}$ satisfying $\|u_n\|_\lambda\leq T$, i.e.,
\begin{equation*}
  \sup_{n \in \N} \|u_n\|_\lambda\leq T, \qquad I_{\lambda,\mu}(u_{n})
   \rightarrow c^T_{\lambda,\mu}  \qquad  \text{and}  \qquad
  (1+\|u_n\|_\lambda)\|I_{\lambda,\mu}'(u_{n})\|_{E'_\lambda}\rightarrow 0.
\end{equation*}
It follows from Lemma \ref{lem 3.5} that there exists $\lambda^\ast>1$ such that
for each $\lambda \in (\lambda^\ast,\,\infty)$ and $\mu \in (0,\,\mu_\ast)$,
the sequence $\{u_n\} $ has a convergent subsequence in $E_\lambda$.
We may then assume that $u_n \rightarrow u_{\lambda,\mu}$ as $n\rightarrow\infty$,
and thus
$$
  \|u_{\lambda,\mu}\|_\lambda\leq T, \qquad
  I_{\lambda,\mu}(u_{\lambda,\mu}) = c^T_{\lambda,\mu}
  \qquad \text{and} \qquad  I_{\lambda,\mu}'(u_{\lambda,\mu})=0.
$$
Consequently, from Lemma \ref{lem 2.1} we see that $u_{\lambda,\mu}$ is a positive solution
of \eqref{eq 1.2} for each $\lambda \in (\lambda^\ast,\,\infty)$ and  $\mu \in (0,\,\mu_\ast)$.
Moreover, since  $\big\langle I'_{\lambda,\mu}(u_{\lambda,\mu}), \:u_{\lambda,\mu}\big\rangle=0$
and $u_{\lambda,\mu}\neq 0$, we have
\begin{equation*}
  \|u_{\lambda,\mu}\|_\lambda^2 \leq |u_{\lambda,\mu}^+|_p^p \leq d_p^p\|u_{\lambda,\mu}\|_\lambda^p,
\end{equation*}
and hence there exists $\tau >0$ $($independent of $\lambda$ and $\mu$$)$
such that $ \|u_{\lambda,\mu}\|_\lambda \geq \tau$ for all $\lambda$ and $\mu$.
This ends the proof.
\end{proof}

\begin{proof}[Proof of Theorem \ref{th 1.2}]
The strategy of proof is inspired by \cite[Theorem 4.1]{Ruiz-2006}.
Suppose that $u \in E_{\lambda}$ is a nontrivial solution of \eqref{eq 1.2}. Multiplying
equation \eqref{eq 1.2} by $u$ and integrating by parts, we obtain
\begin{equation}\label{eq 3.6}
 \int_{\mathbb{R}^{3}}\left(|\nabla u|^{2}+\lambda V(x)u^{2}
 +\mu \phi_{u}u^{2}- |u|^p\right)\:dx = 0.
\end{equation}
By the definition of $\phi_u$, we get that
\begin{align*}
  \int_{\mathbb{R}^{3}} \phi_{u}u^{2} \: dx &= \int_{\mathbb{R}^{3}} \phi_{u} (-\triangle \phi_u)\:dx
  = \int_{\mathbb{R}^{3}} |\nabla \phi_{u}|^2\:dx,\\
   \int_{\mathbb{R}^{3}} |u|^{3} \: dx &= \int_{\mathbb{R}^{3}} (-\triangle \phi_u)|u|\:dx
  = \int_{\mathbb{R}^{3}} \nabla \phi_{u} \nabla |u|\:dx.
\end{align*}
This readily implies that
\begin{equation}\label{eq 3.7}
  \int_{\mathbb{R}^{3}}|u|^{3}\:dx = \int_{\mathbb{R}^{3}}\nabla \phi_{u} \nabla |u|\:dx
  \leq \frac{1}{4\mu}\int_{\mathbb{R}^{3}}|\nabla u|^{2}\:dx + \mu \int_{\mathbb{R}^{3}}\phi_{u}u^{2}\:dx.
\end{equation}
Inserting \eqref{eq 3.7}  into \eqref{eq 3.6}, we may then distinguish the following two cases:

\emph{Case 1: $p = 3$}. In this case, for $\lambda >0$ and $\mu > 1/4$ we have
\begin{align*}
 0&=  \int_{\mathbb{R}^{3}}\left(|\nabla u|^{2}+\lambda V(x)u^{2} +\mu \phi_{u}u^{2}- |u|^3\right)\:dx \\
 &\geq \bigl(1-\frac{1}{4\mu}\bigr)\int_{\mathbb{R}^{3}}|\nabla u|^{2}\:dx.
\end{align*}
This implies that $u$ must be equal to zero.

\emph{Case 2: $2< p < 3$}. In this case,  for $\lambda \geq 1/b$ and $\mu \geq 1 / \bigl[4(1-\left|\mathcal{V}_b\right|^{\frac{2}{3}}S^{-1})\bigr]$
we deduce that
\begin{align*}
 0&=  \int_{\mathbb{R}^{3}}\left(|\nabla u|^{2}+\lambda V(x)u^{2} +\mu \phi_{u}u^{2}- |u|^p\right)\:dx \\
 &\geq \bigl(1-\frac{1}{4\mu}\bigr)\int_{\mathbb{R}^{3}}|\nabla u|^{2}\:dx + \int_{\R^3 \setminus \mathcal{V}_b} u^{2}\:dx
 +\int_{\mathbb{R}^{3}}|u|^{3}\: dx-\int_{\mathbb{R}^{3}}|u|^p\:dx\\
  &\geq \bigl(1-\frac{1}{4\mu}-\left|\mathcal{V}_b\right|^{\frac{2}{3}}S^{-1}\bigr)\int_{\mathbb{R}^{3}}|\nabla u|^{2}\:dx + \int_{\R^3} u^{2}\:dx
 +\int_{\mathbb{R}^{3}}|u|^{3}\: dx-\int_{\mathbb{R}^{3}}|u|^p\:dx\\
  &\geq \int_{\mathbb{R}^{3}} \left(|\nabla u|^{2} + |u|^{3} -|u|^p \right)\:dx
\end{align*}
It is easy to check that, if $2< p < 3$, the function
$$
  h:\: [0,\:\infty) \rightarrow \R, \qquad h(t)= t^2+t^3-t^p
$$
is nonnegative and vanish only at zero. Hence, $u$ must be equal to zero.
The proof is complete.
\end{proof}

\section{Decay rate of positive solutions}\label{section 4}

\setcounter{equation}{0}\setcounter{section}{4}
$ $
\indent
In this section, we explore the decay rate of the positive solutions
for \eqref{eq 1.2} at infinity and give the proof of Theorem \ref{th 1.3}.
For this purpose, throughout this section we always assume that $2<p<4$ and the conditions $(V_{1})$,
$(V'_{2})$ and $(V_{3})$ hold, and assume moreover, for each $\lambda \in (\lambda^\ast,\,\infty)$
and $\mu \in (0,\,\mu_\ast)$, $u_{\lambda,\mu}$ is the positive solution
of \eqref{eq 1.2} obtained by Theorem \ref{th 1.1}.

At first, we give a crucial lemma in the study of the decay rate of solutions,
since it gives an important estimate involving the $L^\infty$-norm of solutions.
We sketch the proof by adopting some arguments which are related to
the Moser iterative method, see e.g. in \cite[p. 270]{Struwe1990}.

\begin{lemma}\label{lem 4.1}
The positive solutions $u_{\lambda,\mu}$ are in $L^\infty(\R^3) \cap  C_{loc}^{1,\alpha}(\R^3)$
for some $0<\alpha<1$. Moreover, there exists $C_0>0$ $($independent of $\lambda$ and $\mu$$)$
such that
\begin{equation*}
  |u_{\lambda,\mu}|_\infty \leq C_0 \qquad  \text{for all $\lambda$ and $\mu$}.
\end{equation*}
\end{lemma}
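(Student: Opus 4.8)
The plan is to establish the $L^\infty$-bound via a Moser iteration scheme applied uniformly in $\lambda$ and $\mu$, and then to invoke elliptic regularity for the $C^{1,\alpha}_{loc}$ conclusion. The starting point is the equation satisfied by $u := u_{\lambda,\mu}$, namely
\begin{equation*}
  -\Delta u + \lambda V(x)u + \mu \phi_u u = u^{p-1} \qquad \text{in } \R^3,
\end{equation*}
where I use that $u>0$. Since $\lambda V \geq 0$ and $\mu \phi_u \geq 0$ (because $\phi_u = \Phi \ast u^2 \geq 0$), both zeroth-order terms on the left are nonnegative and can be discarded when testing with suitable nonnegative test functions. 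This is the key structural observation: the nonlocal term, though it complicates the variational analysis, is \emph{favorable} for the $L^\infty$ estimate since it has the right sign.

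First I would set up the iteration. For $L>0$ and $\beta>1$ define the truncation $u_L = \min\{u, L\}$ and test the equation against $v = u\, u_L^{2(\beta-1)} \in E_\lambda$. Dropping the nonnegative contributions of $\lambda V u^2 u_L^{2(\beta-1)}$ and $\mu \phi_u u^2 u_L^{2(\beta-1)}$, one arrives at an inequality of the form
\begin{equation*}
  \int_{\R^3} |\nabla u|^2 u_L^{2(\beta-1)}\,dx + (\beta-1)\int_{\{u<L\}} |\nabla u|^2 u_L^{2(\beta-1)}\,dx
  \leq \int_{\R^3} u^{p} u_L^{2(\beta-1)}\,dx.
\end{equation*}
Introducing $w_L = u\, u_L^{\beta-1}$ and using the Sobolev inequality $|w_L|_6^2 \leq S^{-1}|\nabla w_L|_2^2$, the left side controls $|w_L|_6^2$ up to a factor of order $\beta$. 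On the right, I would split $u^p = u^{p-2}\cdot u^2 u_L^{2(\beta-1)}$ and exploit that $p-2 \in (0,2)$ together with the \emph{uniform} $H^1$-bound coming from \eqref{eq 1.4} (which gives $|u|_s \leq C$ for $s\in[2,6]$ via \eqref{eq 2.1}, with constants independent of $\lambda,\mu$); a H\"older splitting then bounds the right-hand integral by $C\,|w_L|_{s}^2$ for some subcritical $s$. Sending $L\to\infty$ by Fatou and iterating over $\beta_k = (3/2)^k$ (or a comparable geometric sequence matching the gain from the $6$ versus the lower integrability exponent) yields the Moser recursion $|u|_{6\beta_k} \leq (C\beta_k)^{1/\beta_k} |u|_{s\beta_k}$, whose product telescopes to a finite limit, giving $|u|_\infty \leq C_0$ with $C_0$ depending only on $\tau, T$ and the Sobolev constant, hence independent of $\lambda$ and $\mu$.

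Once $u \in L^\infty$ with a uniform bound, the regularity is routine: the right-hand side $u^{p-1}$ and the potential terms are locally bounded (note $\phi_u \in L^\infty_{loc}$ since $u \in L^\infty \cap L^2$ forces $\phi_u$ continuous and bounded by standard Newtonian-potential estimates), so $L^p_{loc}$ elliptic estimates place $u \in W^{2,q}_{loc}$ for all $q<\infty$, and Sobolev embedding gives $u \in C^{1,\alpha}_{loc}(\R^3)$ for any $0<\alpha<1$. The main obstacle is to carry out the Moser iteration so that \emph{every} constant is tracked and shown independent of $\lambda$ and $\mu$: the danger is that the $\lambda V$ term (which we discard) or the $\mu \phi_u u$ term might be needed to absorb something, but the sign observation above shows they only help, and the uniform $L^s$ bounds from \eqref{eq 1.4} supply the only a priori information required to close the recursion uniformly. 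A secondary technical point is justifying that $v = u\,u_L^{2(\beta-1)}$ is an admissible test function in $E_\lambda$ for each finite $L$, which follows from $u_L \leq L$ and $u \in H^1$.
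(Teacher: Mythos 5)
Your proposal is correct and follows essentially the same route as the paper's proof: a Moser iteration with truncated test functions (your $u\,u_L^{2(\beta-1)}$ is the paper's $v_m$ in different notation), discarding the nonnegative terms $\lambda V u$ and $\mu\phi_u u$, using the uniform bound \eqref{eq 1.4} via H\"older with exponent $p$ to make every constant independent of $\lambda$ and $\mu$, iterating with geometric exponents (the paper takes $\sigma=6/p$, which is your ``comparable geometric sequence''), and concluding $C^{1,\alpha}_{loc}$ regularity from $W^{2,q}_{loc}$ interior estimates plus Sobolev embedding.
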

\begin{proof}
For each $m \in \N$ and $\beta>1$, we set
$$
  A_m = \{x\in \R^3:\: u_{\lambda,\mu}^{\beta-1}(x) \leq m\}, \quad
  B_m = \{x\in \R^3:\: u_{\lambda,\mu}^{\beta-1}(x) > m\}
$$
and
\begin{equation*}
  v_m =
  \left\{
    \begin{array}{ll}
      u_{\lambda,\mu}^{2\beta-1}   \qquad & \hbox{in  $A_m$,} \vspace{0.1cm}\\
      m^2 u_{\lambda,\mu} & \hbox{in  $B_m$.}
    \end{array}
  \right.
\end{equation*}
A direct computation yields that $v_m \in E_\lambda$, $v_m \leq u_{\lambda,\mu}^{2\beta-1}$
and
\begin{equation}\label{eq 4.1}
  \nabla v_m =
  \left\{
    \begin{array}{ll}
     (2\beta -1) u_{\lambda,\mu}^{2\beta-2}\nabla u_{\lambda,\mu} \ \quad & \hbox{in $A_m$,} \vspace{0.1cm}\\
      m^2\nabla u_{\lambda,\mu}    & \hbox{in  $B_m$.}
    \end{array}
  \right.
\end{equation}
Testing \eqref{eq 1.2} with $v_m$, we obtain
\begin{equation}\label{eq 4.2}
  \int_{\R^3} (\nabla u_{\lambda,\mu}\nabla v_m + \lambda V(x) u_{\lambda,\mu} v_m )\:dx
    + \mu\int_{\mathbb{R}^{3}}\phi_{u_{\lambda,\mu}}u_{\lambda,\mu}v_m \:dx
   = \int_{\R^3} u^{p-1}_{\lambda,\mu} v_m \:dx.
\end{equation}
From \eqref{eq 4.1}, we can easily see that
\begin{equation}\label{eq 4.3}
  \int_{\R^3} \nabla u_{\lambda,\mu}\nabla v_m \:dx =
  (2\beta-1)\int_{A_m} u_{\lambda,\mu}^{2\beta-2} |\nabla u_{\lambda,\mu}|^2\:dx
  + m^2 \int_{B_m} |\nabla u_{\lambda,\mu}|^2\:dx.
\end{equation}
Let
\begin{equation*}
  w_m =
  \left\{
    \begin{array}{ll}
      u_{\lambda,\mu}^{\beta}  \qquad & \hbox{in $A_m$,} \vspace{0.1cm}\\
      m u_{\lambda,\mu} & \hbox{in $B_m$,}
    \end{array}
  \right.
\end{equation*}
then we have  $w_m \in E_\lambda$, $w_m \leq u_{\lambda,\mu}^{\beta}$ and
\begin{equation*}
  \nabla w_m =
  \left\{
    \begin{array}{ll}
     \beta u_{\lambda,\mu}^{\beta-1}\nabla u_{\lambda,\mu} \quad & \hbox{in \ $A_m$,} \vspace{0.1cm}\\
      m \nabla u_{\lambda,\mu}    & \hbox{in \ $B_m$},
    \end{array}
  \right.
\end{equation*}
which implies that
\begin{equation*}
  \int_{\R^3}|\nabla w_m|^2 \:dx =
  \beta^2 \int_{A_m} u_{\lambda,\mu}^{2\beta-2} |\nabla u_{\lambda,\mu}|^2\:dx
  + m^2 \int_{B_m} |\nabla u_{\lambda,\mu}|^2\:dx.
\end{equation*}
Combining this with \eqref{eq 4.2} and \eqref{eq 4.3}, we infer that
\begin{equation*}
  \int_{\R^3} |\nabla w_m|^2 \:dx \leq \left[\frac{(\beta-1)^2}{2\beta -1}+1\right]
    \int_{\R^3} \nabla u_{\lambda,\mu}\nabla v_m \:dx
   \leq \beta^2 \int_{\R^3} u^{p-2}_{\lambda,\mu} w_m^2 \:dx.
\end{equation*}
It then follows from the Sobolev and the H\"{o}lder inequalities and \eqref{eq 1.4} that
\begin{align*}
   \left(\int_{A_m} u_{\lambda,\mu}^{6\beta}\: dx \right)^{1/3}
   & = \left(\int_{A_m} |w_m|^6 \:dx  \right)^{1/3}
     \leq S^{-1}  \int_{\R^3} |\nabla w_m|^2 \:dx \\
     &\leq S^{-1} \beta^2 \int_{\R^3} u^{p-2}_{\lambda,\mu} w_m^2 \:dx
       \leq S^{-1} \beta^2  |u_{\lambda,\mu}|_p^{p-2} | w_m|_p^2\\
     &\leq  S^{-1} \beta^2 (d_pT)^{p-2}|u_{\lambda,\mu}|_{p\beta}^{2\beta}.
\end{align*}
Hence, we may let $m\rightarrow \infty$ to derive that
\begin{equation}\label{eq 4.4}
  |u_{\lambda,\mu}|_{6\beta} \leq \beta^{\frac{1}{\beta}}\left(S^{-1}
   (d_pT)^{p-2}\right)^{\frac{1}{2\beta}}|u_{\lambda,\mu}|_{p\beta}.
\end{equation}
Set $\sigma =6 / p$, then we see that $\sigma>1$.
When $\beta = \sigma$ in \eqref{eq 4.4}, we yield that
\begin{equation*}
  |u_{\lambda,\mu}|_{6\sigma}  \leq \sigma^{\frac{1}{\sigma}}\left(S^{-1}
   (d_pT)^{p-2}\right)^{\frac{1}{2\sigma}}|u_{\lambda,\mu}|_{6}.
\end{equation*}
Arguing by iteration, let $\beta=\sigma^j$ in \eqref{eq 4.4},
we may show that
\begin{align}\label{eq 4.5}
 |u_{\lambda,\mu}|_{6\sigma^j}  &\leq \sigma^{\frac{1}{\sigma}+\frac{2}{\sigma^2}+\cdots+
   \frac{j}{\sigma^j}}\left(S^{-1}(d_pT)^{p-2}\right)^{\frac{1}{2}\left(\frac{1}{\sigma}
   +\frac{1}{\sigma^2}+\cdots+\frac{1}{\sigma^j}\right)}|u_{\lambda,\mu}|_{6}\notag\\
  &\leq \sigma^{\frac{\sigma}{(\sigma-1)^2}}\left(S^{-1}(d_pT)^{p-2}\right)^{\frac{1}{2(\sigma-1)}}|u_{\lambda,\mu}|_{6}.
\end{align}
Let $j\rightarrow \infty$ in \eqref{eq 4.5}, we may use \eqref{eq 1.4} again to obtain
\begin{align*}
  |u_{\lambda,\mu}|_{\infty} &\leq \sigma^{\frac{\sigma}{(\sigma-1)^2}}\left(S^{-1}(d_pT)^{p-2}\right)^{\frac{1}{2(\sigma-1)}}|u_{\lambda,\mu}|_{6}\\
  & \leq \sigma^{\frac{\sigma}{(\sigma-1)^2}}\left(S^{-1}(d_pT)^{p-2}\right)^{\frac{1}{2(\sigma-1)}}
  \bigl(S^{-\frac{1}{2}}T\bigr).
\end{align*}
Let
\begin{equation*}
  C_0=\sigma^{\frac{\sigma}{(\sigma-1)^2}}\left(S^{-1}(d_pT)^{p-2}\right)^{\frac{1}{2(\sigma-1)}}
  \bigl(S^{-\frac{1}{2}}T\bigr)\qquad \text{with $\sigma=6/p$},
\end{equation*}
then we get
\begin{equation*}
  |u_{\lambda,\mu}|_\infty \leq C_0 \qquad  \text{for all $\lambda$ and $\mu$}.
\end{equation*}
Thus, in weak sense we have
\begin{equation*}
  -\triangle u_{\lambda,\mu} = u_{\lambda,\mu}^{p-1}- \bigl(\lambda V(x) + \mu \phi_{u_{\lambda,\mu}}(x)\bigr)u_{\lambda,\mu}
  \in L_{loc}^{q} (\R^3) \qquad \text{for all $q\geq 1$}.
\end{equation*}
It then follows from \cite[Theorem 9.11]{Gilbarg1983} that $u_{\lambda,\mu} \in W_{loc}^{2,q}$ for all $q\geq 1$, whence also
$u_{\lambda,\mu} \in  C_{loc}^{1,\alpha}(\R^3)$ for some $0<\alpha<1$ by the Sobolev embedding theorem.
The proof is thus finished.
\end{proof}

\begin{remark}
\rm
We note that, the condition $(V'_2)$ was not used in Lemma \ref{lem 4.1}.
Consequently, under the assumptions of Theorem \ref{th 1.1},
Lemma \ref{lem 4.1} still holds.
\end{remark}

We need the following lemma, which is a simple consequence of Lemma \ref{lem 4.1} and \eqref{eq 1.4}.

\begin{lemma}\label{lem 4.2}
There exist $A,\,R>0$ $($independent of $\lambda$ and $\mu$$)$ such that
\begin{equation*}
  u_{\lambda,\mu}(x) \leq A \lambda^{- 1/2}\qquad
  \text{for all $\lambda,\:\mu$ and $|x| > R$}.
\end{equation*}
\end{lemma}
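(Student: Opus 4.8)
The plan is to convert the $L^2$-smallness of $u_{\lambda,\mu}$ far from the origin — which the uniform energy bound \eqref{eq 1.4} forces to decay like $\lambda^{-1}$ — into a pointwise $\lambda^{-1/2}$ bound by means of a local subsolution estimate whose constant is uniform in $\lambda$ and $\mu$. First I would use $(V'_2)$: since $\mathcal{V}_b$ is bounded, there is $R_0>0$ with $\mathcal{V}_b \subset B_{R_0}(0)$, so that $V(x) \geq b$ whenever $|x| > R_0$. Combining this with \eqref{eq 1.4} gives the integral estimate
\[
  \lambda b \int_{\R^3 \setminus \mathcal{V}_b} u_{\lambda,\mu}^2\,dx
   \leq \int_{\R^3 \setminus \mathcal{V}_b} \lambda V(x) u_{\lambda,\mu}^2\,dx
   \leq \|u_{\lambda,\mu}\|_\lambda^2 \leq T^2,
\]
and hence $\int_{\R^3 \setminus \mathcal{V}_b} u_{\lambda,\mu}^2\,dx \leq T^2/(\lambda b)$; that is, the $L^2$ mass of $u_{\lambda,\mu}$ outside $\mathcal{V}_b$ is $O(\lambda^{-1})$.

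Next I would discard the ``good'' terms to obtain a clean differential inequality. Since $V \geq 0$, $\phi_{u_{\lambda,\mu}} \geq 0$ and $\mu > 0$, equation \eqref{eq 1.2} yields, in the weak sense,
\[
  -\Delta u_{\lambda,\mu} \leq u_{\lambda,\mu}^{p-1}
   = u_{\lambda,\mu}^{p-2}\, u_{\lambda,\mu} \leq C_0^{p-2}\, u_{\lambda,\mu}
   \qquad \text{in } \R^3,
\]
where the last inequality uses $p-2>0$ together with the uniform bound from Lemma \ref{lem 4.1}. Thus $u_{\lambda,\mu}$ is a nonnegative subsolution of $-\Delta w = C_0^{p-2}\, w$ whose zeroth-order coefficient is bounded independently of $\lambda$ and $\mu$.

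Then I would invoke the local maximum principle for subsolutions (the De~Giorgi--Nash--Moser local boundedness estimate, e.g. \cite[Theorem 8.17]{Gilbarg1983}) on a ball of fixed radius. This produces a constant $C=C(C_0,p)>0$, independent of $\lambda$ and $\mu$, such that for every $x_0 \in \R^3$,
\[
  u_{\lambda,\mu}(x_0) \leq \sup_{B_1(x_0)} u_{\lambda,\mu}
   \leq C \Bigl(\int_{B_2(x_0)} u_{\lambda,\mu}^2\,dx\Bigr)^{1/2}.
\]
Setting $R = R_0 + 2$, for $|x_0| > R$ the ball $B_2(x_0)$ is contained in $\R^3 \setminus \mathcal{V}_b$, so the integral estimate from the first step bounds the right-hand side by $C\,T/\sqrt{\lambda b}$. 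Taking $A = C\,T/\sqrt{b}$ then gives $u_{\lambda,\mu}(x_0) \leq A\,\lambda^{-1/2}$ for all $|x_0| > R$, with $A$ and $R$ independent of $\lambda$ and $\mu$.

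The main obstacle is ensuring that the constant in the subsolution estimate does not secretly depend on $\lambda$ or $\mu$. The key point is that the nonnegativity of $\lambda V$ and of the nonlocal term $\mu \phi_{u_{\lambda,\mu}} u_{\lambda,\mu}$ allows one to drop both entirely when passing to the subsolution inequality, while Lemma \ref{lem 4.1} replaces the superlinear right-hand side by the linear term $C_0^{p-2} u_{\lambda,\mu}$ with a \emph{uniform} coefficient. With the radius of the balls fixed once and for all, the resulting Moser constant depends only on $C_0$, $p$ and the dimension, all of which are uniform in $\lambda$ and $\mu$; this uniformity is exactly what upgrades the $\lambda^{-1}$ decay of the $L^2$ mass to the claimed uniform $\lambda^{-1/2}$ pointwise bound.
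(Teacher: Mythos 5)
Your proposal is correct and follows essentially the same route as the paper: both drop the nonnegative terms $\lambda V u$ and $\mu\phi_u u$ to view $u_{\lambda,\mu}$ as a subsolution with zeroth-order coefficient uniformly bounded via Lemma \ref{lem 4.1}, apply the local boundedness estimate \cite[Theorem 8.17]{Gilbarg1983} on balls of fixed radius, and feed in the $O(\lambda^{-1})$ bound on the $L^2$ mass outside $\mathcal{V}_b$ coming from $(V'_2)$ and \eqref{eq 1.4}. The only differences are cosmetic (you estimate the $L^2$ mass over all of $\R^3\setminus\mathcal{V}_b$ rather than over each ball $B_2(y)$, and your choice of $R$ differs by an inessential constant).
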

\begin{proof}
Let $c(x)=- u_{\lambda,\mu}^{p-2}(x)$ for $x \in \R^3$, we deduce from
Lemma \ref{lem 4.1} that
\begin{equation*}
  |\,c(x)| \leq C_0^{p-2} \qquad \text{for all $x \in \R^3$.}
\end{equation*}
Observe that
\begin{equation*}
  -\triangle u_{\lambda,\mu} + c(x) u_{\lambda,\mu} \leq 0,
\end{equation*}
it then follows from \cite [Theorem 8.17]{Gilbarg1983} that
there exists $C_1>0$ $($independent of $\lambda$ and $\mu$$)$ such that
\begin{equation}\label{eq 4.6}
  \sup_{x \in B_1(y)} u_{\lambda,\mu}(x) \leq C_1 |u_{\lambda,\mu}|_{L^2(B_2(y))}
   \qquad \text{for all $y \in \R^3$.}
\end{equation}
By $(V'_2)$, there exists $R_1>0$ such that $\mathcal{V}_b \subset B_{R_1}(0)$,
and so
\begin{equation}\label{eq 4.7}
  V(x)\geq b \qquad  \text{for \ $|x|\geq R_1$.}
\end{equation}
Combining this with \eqref{eq 1.4} gives
\begin{equation}\label{eq 4.8}
  |u_{\lambda,\mu}|_{L^2(B_2(y))} \leq  T(b\lambda )^{-1/2} \qquad
  \text{for all $|y|\geq R_1+2$}.
\end{equation}
Now \eqref{eq 4.6} and \eqref{eq 4.8} yield that
\begin{equation*}
  u_{\lambda,\mu}(x) \leq C_1 T ( b\lambda )^{-1/2} \qquad
  \text{for all $\lambda,\:\mu$ and $|x|> R_1 +1$},
\end{equation*}
and thus the claim follows by choosing $A=C_1 T b^{-1/2}$ and $R=R_1+1$.
\end{proof}

\begin{proof}[Proof of Theorem \ref{th 1.3}]
Set
$$
  W_{\lambda,\mu} (x)= \lambda V(x) - |u_{\lambda,\mu}(x)|^{p-2}
  \qquad \text{for all  $x \in \R^3$.}
$$
By \eqref{eq 4.7} and Lemma \ref{lem 4.1}, there exists $\Lambda^\ast> \lambda^\ast$
such that for each $\lambda \in (\Lambda^\ast,\,\infty)$ and $\mu \in (0,\,\mu_\ast)$,
we have
$$
  W_{\lambda,\mu} (x)\geq \frac{b}{2}\lambda := \beta^2 \lambda \qquad
  \text{for all  $|x| > R$},
$$
and hence
\begin{equation}\label{eq 4.9}
  -\triangle u_{\lambda,\mu}(x)+ \beta^2 \lambda  u_{\lambda,\mu}(x) \leq 0 \qquad \text{for $|x|> R$}.
\end{equation}
Fix $\varphi_\lambda(x)= A \lambda^{-1/2} \exp\bigl(- \beta \lambda^{1/2}(|x|-R)\bigr)$,
we may deduce from Lemma \ref{lem 4.2} that
\begin{equation}\label{eq 4.10}
  \varphi_\lambda(x) \geq u_{\lambda,\mu}(x)\qquad \text{for  all $|x|= R$.}
\end{equation}
It is easy to see that
\begin{equation}\label{eq 4.11}
  \triangle \varphi_\lambda (x) \leq  \beta^2 \lambda \varphi_\lambda (x)  \qquad \text{for  all $|x|\neq 0$.}
\end{equation}
Define $\psi_\lambda=\varphi_\lambda - u_{\lambda,\mu}$. Using \eqref{eq 4.9}, \eqref{eq 4.10}
and \eqref{eq 4.11}, we yield that
\begin{equation*}
\left\{
  \begin{array}{ll}
    - \Delta \psi_\lambda (x) + \beta^2 \lambda \psi_\lambda (x) \geq 0 \quad & \hbox{in $\,|x|> R$,}
        \vspace{0.1cm}\\
    \psi_\lambda(x) \geq 0, & \hbox{on $|x|= R$.}
  \end{array}
\right.
\end{equation*}
The maximum principle (see e.g. \cite[Theorem 8.1]{Gilbarg1983}) implies that $\psi_\lambda (x)\geq 0$
for all $|x|\geq R$, and thus the claim follows.
\end{proof}

\section{Asymptotic behavior of positive solutions}\label{section 5}

\setcounter{equation}{0}\setcounter{section}{5}
$ $
\indent
In this section, we investigate the asymptotic behavior of positive solutions
for \eqref{eq 1.2} and give the proofs of Theorems \ref{th 1.4}, \ref{th 1.5}
and \ref{th 1.6}.

\begin{proof}[Proof of Theorem \ref{th 1.4}]
We follow the argument in \cite{Bartsch2001} (or see \cite{Dingyanheng2007,Zhaoleiga2013}).
Let $\mu \in (0,\,\mu_{\ast})$ be fixed, then for any sequence
$\lambda_{n}\rightarrow \infty$, let $u_{n}:=u_{\lambda_{n},\mu}$
be the positive solution of \eqref{eq 1.2} obtained by Theorem \ref{th 1.1}.
It follows from \eqref{eq 1.4} that
\begin{equation}\label{eq 5.1}
  0 < \tau \leq \|u_{n}\|_{\lambda_{n}} \leq T \qquad
  \text{for all $n$}.
\end{equation}
Thus, up to a subsequence, we may assume that
\begin{equation}\label{eq 5.2}
 \left\{
   \begin{array}{ll}
     u_{n} \rightharpoonup u_{\mu}  & \hbox{in \ $E$,} \vspace{0.1cm} \\
     u_{n} \rightarrow u_{\mu} & \hbox{in \ $L_{loc}^{s}(\mathbb{R}^{3})$
          \ \text{for} \ $s \in [2,\,6)$,}  \vspace{0.1cm}\\
     u_{n} \rightarrow  u_{\mu}  & \hbox{a.e. on $\R^3$.}
   \end{array}
 \right.
\end{equation}
By \eqref{eq 5.1}, \eqref{eq 5.2} and Fatou's lemma, we have
\begin{equation*}
  \int_{\mathbb{R}^{3}} V(x)u_{\mu}^{2} \: dx \leq \liminf_{n\rightarrow\infty}\int_{\mathbb{R}^{3}}
   V(x)u_{n}^{2} \:dx \leq \liminf_{n\rightarrow\infty}\frac{\|u_{n}\|_{\lambda_{n}}^{2}}{\lambda_{n}}=0.
\end{equation*}
Hence, $u_{\mu}=0$ a.e. in $\mathbb{R}^{3}\setminus V^{-1}(0)$,
and so $u_{\mu} \in H_{0}^{1}(\Omega)$ by the condition $(V_{3})$.

Now we show that $u_{n}\rightarrow u_{\mu}$ in $L^{s}(\mathbb{R}^{3})$
for $2<s<6$. Otherwise, by Lions' vanishing lemma (see e.g. \cite{LionsPL1984,Willem1996})
there exist $\delta,\,r>0$ and $x_{n}\in \mathbb{R}^{3}$ such that
\begin{equation*}
  \int_{B_{r}(x_{n})} (u_{n}-u_{\mu})^{2}\: dx \geq \delta.
\end{equation*}
This implies that $|x_{n}|\rightarrow\infty$, and so $\bigl|B_{r}(x_{n})\cap
\{x\in \mathbb{R}^{3}: \: V(x)<b\}\bigr|\rightarrow0$.
By the H\"{o}lder inequality, we then conclude that
\begin{equation*}
  \int_{B_{r}(x_{n})\cap\{V<b\}} (u_{n}-u_{\mu})^{2}\: dx \rightarrow 0.
\end{equation*}
Consequently, we get
\begin{equation*}
  \begin{split}
    \|u_{n}\|_{\lambda_{n}}^{2}&\geq \lambda_{n} b \int_{B_{r}(x_{n})\cap\{V\geq b\}}
      u_{n}^{2}\:dx= \lambda_{n} b \int_{B_{r}(x_{n})\cap\{V \geq b\}} (u_{n}-u_{\mu})^{2}\:dx\\
    &= \lambda_{n}b\bigg(\int_{B_{r}(x_{n})}(u_{n}-u_{\mu})^{2} \:dx
       - \int_{B_{r}(x_{n})\cap\{V< b\}}(u_{n}-u_{\mu})^{2}\:dx\bigg)\\
    &\rightarrow\infty,
\end{split}
\end{equation*}
which contradicts \eqref{eq 5.1}.

We then prove that $u_{n}\rightarrow u_{\mu}$  in $E$.
Since
$$
 \big\langle I'_{\lambda_{n},\mu}(u_{n}), \: u_{n}\big\rangle
 =\big\langle I'_{\lambda_{n},\mu}(u_{n}), \: u_{\mu}\big\rangle=0,
$$
we have
\begin{equation}\label{eq 5.3}
  \|u_{n}\|_{\lambda_{n}}^{2} + \mu\int_{\mathbb{R}^{3}}\phi_{u_n}u_n^2\:dx
   = |u_{n}^+|_p^p,
\end{equation}
\begin{equation}\label{eq 5.4}
  \|u_{\mu}\|^{2} + \mu\int_{\mathbb{R}^{3}}\phi_{u_\mu}u_\mu^2\:dx
   =|u_{\mu}^+|_p^p+o(1).
\end{equation}
By \eqref{eq 5.2} and Fatou's Lemma, after passing to subsequence, we yield that
\begin{equation}\label{eq 5.5}
  \lim_{n\rightarrow\infty} \int_{\mathbb{R}^{3}}\phi_{u_n}u_n^2\:dx
  \geq \int_{\mathbb{R}^{3}}\phi_{u_\mu}u_\mu^2\:dx.
\end{equation}
From \eqref{eq 5.3}--\eqref{eq 5.5}, we thus deduce that
$$
  \lim\limits_{n\rightarrow\infty}\|u_{n}\|_{\lambda_{n}}^{2} \leq \|u_{\mu}\|^{2}.
$$
It then follows from the weakly lower semi-continuity of norm that
\begin{equation}\label{eq 5.6}
  \|u_{\mu}\|^{2} \leq \liminf\limits_{n\rightarrow\infty}\|u_{n}\|^{2}
   \leq  \limsup\limits_{n\rightarrow\infty}\|u_{n}\|^{2}
  \leq \lim\limits_{n\rightarrow\infty}\|u_{n}\|_{\lambda_{n}}^{2}
  \leq \|u_{\mu}\|^{2},
\end{equation}
Consequently, we yield that $u_{n}\rightarrow u_{\mu}$ in E.

Finally, we only need to show that $u_{\mu}$ is a
positive solution of $(\mathcal{P}_{\infty, \mu})$.
Now for any $v \in C_{0}^{\infty}(\Omega)$, since
$\big\langle I'_{\lambda_{n},\mu}(u_{n}), \: v\big\rangle=0$,
it is easy to check that
\begin{equation*}
  \int_{\mathbb{R}^{3}}\nabla u_{\mu}\nabla v\:dx
  +\mu \int_{\mathbb{R}^{3}}\phi_{u_\mu}u_\mu v\:dx
  =\int_{\mathbb{R}^{3}}|u_{\mu}^+|^{p-2}u_{\mu}^+ v\:dx,
\end{equation*}
i.e., $u_{\mu}$ is a nonnegative solution of $(\mathcal{P}_{\infty, \mu})$ by
the density of $C_{0}^{\infty}(\Omega)$ in $H_{0}^{1}(\Omega)$.
By \eqref{eq 5.1} and \eqref{eq 5.6}, we infer that
$$
  \|u_\mu\|=\lim\limits_{n\rightarrow\infty}\|u_{n}\|_{\lambda_{n}}\geq \tau >0,
$$
and so $u_\mu \neq 0$. Therefore, the strong maximum principle
implies that $u_\mu > 0$ in $\R^3$. The proof is thus finished.
\end{proof}

\begin{proof}[Proof of Theorem \ref{th 1.5}]
Let $\lambda \in (\lambda^\ast,\,\infty)$ be fixed, then for any sequence $\mu_{n}\rightarrow 0$,
let $u_{n}:=u_{\lambda,\mu_n}$ be the positive solution of $(\mathcal{P}_{\lambda,\mu_n})$
obtained by Theorem \ref{th 1.1}. It follows from \eqref{eq 1.4} that
\begin{equation}\label{eq 5.7}
  0< \tau < \|u_{n}\|_{\lambda}\leq T \qquad
  \text{for all $n$}.
\end{equation}
Passing to a subsequence if necessary, we may assume that
$ u_{n} \rightharpoonup u_{\lambda}$ in $E_\lambda$.
Note that $I'_{\lambda,\mu_n}(u_{n}) = 0$,
we may deduce that $ u_{n} \rightarrow u_{\lambda}$ in $E_\lambda$
as the proof of Lemma \ref{lem 3.5}.

To complete the proof, it suffices to show that $u_{\lambda}$
is a positive solution of $(\mathcal{P}_{\lambda,0})$.
Now for any $v \in E_\lambda$, since
$\big\langle I'_{\lambda}(u_{n}),\:v\big\rangle=0$,
it is easy to check that
\begin{equation*}
  \int_{\mathbb{R}^{3}} (\nabla u_{\lambda}\nabla v + \lambda V(x)u_{\lambda} v)\:dx
  =\int_{\mathbb{R}^{3}}|u_{\lambda}^+|^{p-2}u_{\lambda}^+ v\:dx,
\end{equation*}
i.e., $u_{\lambda}$ is a nonnegative solution of $(\mathcal{P}_{\lambda,0})$.
Then, by \eqref{eq 5.7} we see that $u_\lambda \neq 0$. Therefore,
the strong maximum principle implies that $u_\lambda > 0$ in $\R^3$.
This completes the proof.
\end{proof}

\begin{proof}[Proof of Theorem \ref{th 1.6}]
The proof of  Theorem \ref{th 1.6} is similar to the proof of
Theorem \ref{th 1.4}, and we leave the detail to the reader.
\end{proof}

\noindent\textbf{Acknowledgements}
\vskip 0.2 true cm

The authors would like to express sincere thanks to the anonymous referee for
his/her carefully reading the manuscript and  valuable comments
and  suggestions.


\end{document}